\documentclass[reqno]{amsart}
\usepackage[displaymath]{lineno}
\usepackage{amssymb,amsthm,amsmath,hyperref,bbm,color}
\usepackage{geometry}
\geometry{
left=110pt,
right=110pt,
marginpar=148pt
}
\makeindex
\hypersetup{
	colorlinks = true,
	linkcolor = blue,
	citecolor = blue,
	pdfauthor = 'Jason Polak',
	pdfkeywords = '2da3e562039dc07140d88056c11480c1'
}
\usepackage[all]{xy}
%%\numberwithin{equation}{section}
\definecolor{orangetwo}{rgb}{0.9803922, 0.4196078, 0.3058824}
\definecolor{bluetwo}{rgb}{0.1843137, 0.3843137, 0.5176471}
\definecolor{freading}{rgb}{ 0.8980392,1.0000000,0.9058824}
\definecolor{gray}{rgb}{0.25,0.25,0.25}
\definecolor{dgray}{rgb}{0.15,0.15,0.15}
\definecolor{white}{rgb}{1,1,1}

\addtolength{\textwidth}{.6in}
\addtolength{\hoffset}{-.3in}
%\addtolength{\textheight}{0.8in}
%\addtolength{\voffset}{-0.8in}
\usepackage{fancyhdr}
\pagestyle{fancy}
\lhead{\footnotesize {\sffamily [\thepage]}}
\chead{}
\rhead{{\bfseries\footnotesize\nouppercase\rightmark}}
\lfoot{}
\cfoot{}
\rfoot{}

\newtheoremstyle{nthm}
{3pt}
{3pt}
{\itshape}%body font
{0em}%indent amount
{\bfseries}%theorem head font
{.}%puncuation after theorem head
{.5em}%space after theorem head
{}%theoremheadspec who knows/??

\newtheoremstyle{ndef}
{3pt}
{3pt}
{}%body font
{0em}%indent amount
{\bfseries}%theorem head font
{.}%puncuation after theorem head
{.5em}%space after theorem head
{}%theo

\newtheoremstyle{nrem}
{3pt}
{3pt}
{}%body font
{0em}%indent amount
{\itshape}%theorem head font
{}%puncuation after theorem head
{.5em}%space after theorem head
{}%theor

\swapnumbers
\theoremstyle{nthm}
\newtheorem{thm}[subsubsection]{Theorem}
\newtheorem{lem}[subsubsection]{Lemma}
\newtheorem{prop}[subsubsection]{Proposition}

\theoremstyle{ndef}
\newtheorem{defn}[subsubsection]{Definition}
\theoremstyle{nrem}

\newtheorem{rem}[subsubsection]{Remark}

	%%Symbol Shortcuts

\newcommand{\C}{\mathbb{C}}

\newcommand{\G}{\mathbb{G}}

\newcommand{\R}{\mathbb{R}}

\newcommand{\Z}{\mathbb{Z}}

\newcommand{\afr}{\mathfrak{a}}

\newcommand{\gfr}{\mathfrak{g}}
\newcommand{\hfr}{\mathfrak{h}}

\newcommand{\mfr}{\mathfrak{m}}

\newcommand{\ofr}{\mathfrak{o}}

\newcommand{\ufr}{\mathfrak{u}}

\newcommand{\zfr}{\mathfrak{z}}

\newcommand{\Dcl}{\mathcal{D}}

\newcommand{\Ocl}{\mathcal{O}}

\newcommand{\Scl}{\mathcal{S}}

\newcommand{\floor}[1]{\left\lfloor #1\right\rfloor}
\newcommand{\ceil}[1]{\left\lceil #1 \right\rceil}
\DeclareMathOperator{\Gal}{Gal}

	%Arrows

	%MathOperators

	%Categories

	\newcommand{\stc}{\mathrm{stc}}

	%Constructions

	% Some common objects

	\newcommand{\GL}{\mathrm{GL}}
	\newcommand{\UU}{\mathrm{U}}
	\newcommand{\gl}{\mathfrak{gl}}

%%End Definitions

\DeclareMathOperator{\Ad}{Ad}

 %%supposed to be mathbb little g
 %%mathbb little t

\DeclareMathOperator{\Lie}{Lie}

\newcommand{\Res}[1]{\mathrm{Res}_{#1}}

\newcommand{\df}[1]{\emph{#1}}

\newsavebox{\fmbox}

\DeclareMathOperator{\diag}{diag}

%%Group Lie Algebras

\newcommand{\dd}{\mathrm{d}}

\newcommand{\case}[2]{\noindent {\bfseries Case #1: #2}} 

\begin{document}
%\linenumbers
\title[Exposing Relative Endoscopy]{ {\bfseries Exposing Relative Endoscopy in Unitary Symmetric Spaces } }
\author{Jason K.C. Pol\'ak}
\begin{abstract}
	We introduce a new class of symmetric space orbital integrals important for applications in certain \emph{relative trace formulas} appearing in the theory of automorphic representations. We verify a fundamental lemma for $\UU_2\times\UU_2\hookrightarrow \UU_4$ via an explicit calculation, showing strong evidence that there is a general theory of endoscopy lurking in this situation.
\end{abstract}
\date{\today}
\maketitle
\thispagestyle{empty}
~\\~\\

\tableofcontents
\newpage
\thispagestyle{plain}

\section{Introduction}
\subsection{Endoscopy and Representations}
Endoscopy is the theory that allows one to relate $\kappa$-orbital integrals on a reductive algebraic group to stable orbital integrals on smaller \emph{endoscopic} groups. It has been a key tool in establishing special cases of Langlands functoriality, which relates \emph{automorphic representations} of such a group to its endoscopic friends. Endoscopy owes its existence to the following phenomenon: if $G$ is a connected reductive group over a field $F$ with algebraic closure $\overline{F}$, and we are given an $F$-representation $V$, then two elements $\gamma,\gamma'\in V(F)$ may be $G(\overline{F})$-conjugate but not necessarily $G(F)$-conjugate. The $G(F)$-conjugacy classes in the $G(\overline{F})$-conjugacy class of $\gamma$ are parametrised by the pointed set $\Dcl = \ker[H^1(F,I_\gamma)\to H^1(F,G)]$ where $I_\gamma$ is the centraliser of $\gamma$. In favourable circumstances such as the one in this paper, $\Dcl$ is actually a finite abelian group, and in general one can replace it with an abelian group using the technology in~\cite{borovoiabelian}. If $V$ is the adjoint representation or a representation of a subgroup $H$ of $G$ on an $H$-invariant subspace of $V$, and $\gamma$ is semisimple with torus centraliser and regular with respect to this representation, then we say that $\gamma$ and $\gamma'$ are \emph{stably conjugate} whenever they are $G(\overline{F})$-conjugate. We assume that $\gamma$ is of this type for the rest of the paper since we will not actually need the more general definition of stable conjugacy.

If $F$ is now a complete nonarchimedean local field with ring of integers $\ofr$, $G$ and $V$ are defined over $\ofr$, and $\kappa:\Dcl\to \C^\times$ a character, we can form a \emph{$\kappa$-orbital integral}, which is the sum of orbital integrals
\begin{linenomath*}\begin{align*}
	\Ocl^\kappa_\gamma(\mathbf{1})=\sum_{\gamma'} \kappa(\gamma')\int_{I_{\gamma'}(F)\backslash G(F)} \mathbf{1}(\rho(g)^{-1}\gamma')\frac{\dd g}{\dd t}
\end{align*}\end{linenomath*}
where $\mathbf{1}$ is the characteristic function of $V(\ofr)$ and the Haar measures are chosen appropriately so that the integral points have unit volume. We have abused notation by writing $\kappa(\gamma')$ for $\kappa$ evaluated at the cohomology class corresponding to $\gamma'$. In this special case of $\kappa = 1$, this sum of integrals is called a \emph{stable orbital integral}, and is written $\Scl\Ocl_\gamma$. The fundamental lemma for Lie algebras, established in full generality by Ng\^o~\cite{ngo2010lemme}, states that there exists for each $\kappa$ a reductive algebraic group $H$ over $F$, and for each regular semisimple $\gamma\in \gfr(F)$, there exists a regular semisimple $\gamma_H\in \hfr(F)$ such that
\begin{linenomath*}\begin{align*}
	\Ocl^\kappa_\gamma = \Delta(\gamma,\gamma_H) \Scl\Ocl_{\gamma_H}.
\end{align*}\end{linenomath*}
where the representation is the adjoint representation.

Here, $\Delta(\gamma,\gamma_H)\in\C$ is some factor that depends only on $\gamma$ and $\gamma_H$, and is in fact a power of $q$ up to a root of unity. This identity allows one to compare the stabilised trace formula on $G$ and on $H$, which has led to spectacular applications in establishing special cases of Langlands functoriality. 

\subsection{In This Paper} We initiate the study of $\kappa$-orbital integrals for a pair $(G,\theta)$ where $G$ is a connected reductive algebraic group over a complete nonarchimedean local field and $\theta:G\to G$ is an involution, important for the theory of various relative trace formula. Now, instead of the adjoint representation, we use the adjoint action of  the identity component of the $\theta$-fixed points, $(G^\theta)^\circ$ on $\gfr_1 =\{ x\in \gfr : \theta(x) = -x\}$. Hence the group $G_0 := (G^\theta)^\circ$ is the focus of attention. In this setting we have a $\kappa$-orbital integral
\begin{linenomath*}\begin{align}\label{eqn:kapparel}
	\Ocl^\kappa_\gamma(\mathbf{1})=\sum_{\gamma'} \kappa(\gamma')\int_{I_{\gamma'}(F)\backslash G_0(F)} \mathbf{1}(\Ad(g)^{-1}\gamma')\frac{\dd g}{\dd t}
\end{align}\end{linenomath*}
We prove a fundamental lemma for $(\UU(4),\theta)$ where $\theta$ is an involution such that $\UU(4)^\theta \cong \UU(2)\times\UU(2)$ and when $\gamma$ is of the form $\gamma = \diag(x,y,-y,-x)$ with $x\not=\pm y\in F^\times$. Motivated by the usual fundamental lemma for unitary groups, we define for a nontrivial $\kappa:\Dcl(I_{\gamma})\to\C^\times$ the endoscopic symmetric space to be $(H,\theta_H) = (\UU_2,\sigma)\times( \UU_2,\sigma)$ where $\sigma:\UU_2\to \UU_2$ is such that $\UU_2^\sigma\cong\UU_1\times\UU_1$. We then set $\gamma_H = \diag(x,-x)\times\diag(y,-y)\in\hfr_1$. We assume $\gamma$ to be \emph{nearly singular}, which by definition means that $v(x+y)> v(x-y)$. Under these assumptions, we prove the following fundamental lemma.
\begin{thm}The $\kappa$-orbital as defined in~\eqref{eqn:kapparel} satisfies
	\begin{linenomath*}
		\begin{align*}
			\Ocl^\kappa_\gamma(\mathbf{1}_{\gfr_1(\ofr)}) = \Delta(\gamma,\gamma_H)\Scl\Ocl_{\gamma_H}(\mathbf{1}_{\hfr_1(\ofr)}).
		\end{align*}
		where $\Delta(\gamma,\gamma_H)\in\C$ can be calculated explicitly and is a simple power of $q$ up to a root of unity.
	\end{linenomath*}
\end{thm}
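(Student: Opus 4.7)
The plan is to evaluate both sides of the identity in closed form by direct matrix computation, extract the transfer factor as a ratio, and check that it has the advertised shape. The argument organises naturally around four ingredients: the parametrisation of the stable class of $\gamma$, an explicit set of representatives, the reduction of each orbital integral to a lattice count, and the (easier) endoscopic computation.

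First I would fix a concrete model of $\UU(4)$ relative to the unramified quadratic extension $E/F$ and pin down $\theta$ so that $\gamma=\diag(x,y,-y,-x)\in\gfr_1$ and $G_0=\UU(2)\times\UU(2)$. Because the four eigenvalues of $\gamma$ are distinct and paired by $\theta$, the centraliser $I_\gamma$ in $G_0$ is a two-dimensional torus whose splitting data is controlled by $F(x)$ and $F(y)$. A Tate--Nakayama computation identifies $\Dcl=\ker[\Hm^1(F,I_\gamma)\to\Hm^1(F,G_0)]$ as a finite abelian $2$-group, and the nontrivial character $\kappa$ can then be written down explicitly in these coordinates. For each $d\in\Dcl$ I would produce a matrix representative $\gamma_d$ of the corresponding $G_0(F)$-orbit, obtained by twisting $\gamma$ by an appropriate cocycle, and identify which $\gamma_d$ actually meet $\gfr_1(\ofr)$ (the rest contribute zero).

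Next comes the main computation: each $\Ocl_{\gamma_d}(\mathbf{1}_{\gfr_1(\ofr)})$ becomes the measure of the cosets $g I_{\gamma_d}(F)\in I_{\gamma_d}(F)\backslash G_0(F)$ satisfying $\Ad(g)^{-1}\gamma_d\in\gfr_1(\ofr)$, which in turn becomes a count of lattices in the rank-two $\ofr_E$-module on which $G_0$ acts. The nearly singular assumption $v(x+y)>v(x-y)$ confines the admissible lattices to a small neighbourhood of the degenerate locus $x=-y$, allowing the count to be evaluated in closed form as a polynomial in the single parameter $v(x+y)-v(x-y)$. On the endoscopic side, $H_0\cong(\UU_1)^4$ is abelian and $\hfr_1$ decomposes as a product of two rank-one pieces, so $\Scl\Ocl_{\gamma_H}(\mathbf{1}_{\hfr_1(\ofr)})$ factors as a product of two elementary integrals computed directly. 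Assembling the $\kappa$-weighted sum from the $G$-side and dividing by the endoscopic product defines $\Delta(\gamma,\gamma_H)$; inspection should confirm that it is a power of $q$ times a root of unity.

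The principal obstacle lies in the $G$-side computation: each individual $\Ocl_{\gamma_d}$ is itself a large polynomial in $q$, and the $\kappa$-weighted sum must undergo nearly total cancellation to leave only the endoscopic remainder. The nearly singular assumption is the essential device that makes this cancellation visible, as it restricts the integration to a region small enough for the lattice combinatorics of $\UU_1\times\UU_1\hookrightarrow\UU_2$ inside each block of $G_0$ to be written down by hand; without it one would need a far more structural argument, in the spirit of Ng\^o's use of the Hitchin fibration for the standard fundamental lemma.
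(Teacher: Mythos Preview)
Your framing of the setup (cohomological parametrisation of the stable class, explicit cocycle representatives, and the observation that the endoscopic side is essentially $1$ because $H_0$ is abelian) matches the paper closely. Where you diverge is in the heart of the computation, and there the proposal is underspecified in ways that matter.

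The paper does \emph{not} interpret $\Ocl_{\gamma_d}(\mathbf{1})$ as a lattice count in a rank-two $\ofr_E$-module. Instead it uses an Iwasawa decomposition $G_0(F)=M(F)U(F)G_0(\ofr)$ coming from an explicit cocharacter, reduces the integral to one over $(c,d,r_1,r_2)\in F\delta\times F\delta\times E^\times\times E^\times$, and then writes down seven explicit expressions $E_1,\dots,E_7$ (and their stably-conjugate analogues $E_i^{\stc}$) whose simultaneous integrality is equivalent to $\Ad(u^{-1}m^{-1})\gamma\in\gfr_1(\ofr)$. The bulk of the work is a valuation case analysis on these $E_i$, organised by $h=v(r_1)+v(r_2)$ and by whether the two summands of $E_6$ and $E_7$ are individually integral. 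Your ``rank-two $\ofr_E$-module'' picture does not obviously fit here: $\gfr_1$ is an eight-dimensional $F$-space mixing the two $\UU_2$-blocks, and the paper never factors the problem block-by-block in the way your last paragraph suggests.

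Your account of the nearly-singular hypothesis also misses the point. In the paper, $V_p>V_m$ is not used to localise to a ``small neighbourhood'' but rather to kill specific degenerate subcases in the valuation analysis (Lemmas~\ref{thm:secondworstcase} and~\ref{thm:secondworstcasestc}): it guarantees that if the summands of $E_7$ are integral then so are those of $E_6$, which prunes the case tree. Finally, your prediction that the $\kappa$-integral is a polynomial in the single parameter $v(x+y)-v(x-y)$ is off: the paper obtains $\Ocl_\gamma^\kappa(\mathbf{1})=(-1)^{V_m+V_p}q^{2V_m}$, so the exponent of $q$ depends on $V_m=v(x-y)$ alone and the sign on the parity of $V_m+V_p$. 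If you want to carry out your lattice-count alternative, you would need to identify precisely which Hermitian lattices are being counted and explain why the count collapses to $q^{2V_m}$; as written the proposal does not supply that mechanism.
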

Even though relative orbital integrals have been considered previously in the literature, this is the \emph{first known example} of endoscopy in this setting, and will be helpful in formulating more general relative endoscopic fundamental lemmas. Our primary motivation is the relative trace formula studied in~\cite{getzw2013}, the original goal of those authors being to produce nontrivial cycles on unitary Shimura varieties.

\subsection{Acknowledgements}The author would like to thank Jayce R. Getz for suggesting this problem, a thorough reading of the manuscript, and for encouragement.

\section{Conventions and Convenient Facts}

\subsection{Fields, Groups, and Haar Measures}\label{sec:introfirst}
Let $F$ be a complete nonarchimedean local field of \emph{odd} positive characteristic with algebraic closure $\overline{F}$ and $E/F$ an unramified quadratic extension. We denote the nontrivial action of the Galois group by $x\mapsto\overline{x}$. Since $E/F$ is fixed throughout, we simply use $N(x) = x\overline{x}$ to denote the norm of $x$. We fix once and for all a $\delta\in E$ such that $\overline{\delta} = -\delta$ and $v(\delta)=0$ so that $F\delta = \{ x\in E : \overline{x} = -x\}$. Let $\ofr\subset F$ be the ring of integers with maximal ideal $\mfr$, and let $\ofr_E$ be the ring of integers of $E$. We write $q = |\ofr/\mfr|$, the cardinality of the residue field. We fix once and for all a uniformiser $\pi$, and denote the resulting valuation on $F$ by $v:F^\times\to\Z$ so that $v(\pi) = 1$.

In our computations we will consider various Haar measures on locally compact groups of the form $G(F)$ where $G$ is a linear algebraic group over $\ofr$. These are always normalised so that $G(\ofr)$ has unit volume.  We will need the following proposition which follows from $\ofr_E$ being stable under $\Gal(E/F)$.
\begin{prop}\label{thm:basicintegrality}
	If $ax + b\in\ofr_{E}$ where $a,b\in F$ and $x\in F\delta$ then $ax\in\ofr_E$ and $b\in\ofr_E$.\qedsymbol
\end{prop}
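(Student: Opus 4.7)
The plan is to exploit the Galois-stability of $\ofr_E$ together with the odd-characteristic hypothesis. Since $x \in F\delta$, we have $\bar{x} = -x$, while $a, b \in F$ are Galois-fixed. Applying the nontrivial element of $\Gal(E/F)$ to the element $ax + b \in \ofr_E$ and using that $\ofr_E$ is stable under this action yields $-ax + b \in \ofr_E$ as well.

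Now I would simply add and subtract. Addition gives $2b \in \ofr_E$, and subtraction gives $2ax \in \ofr_E$. Since $F$ has odd characteristic, $2$ is a unit in $\ofr$ and hence a unit in $\ofr_E$, so we may divide through to conclude $b \in \ofr_E$ and $ax \in \ofr_E$. Finally $b \in F \cap \ofr_E = \ofr$, though the statement only asks for membership in $\ofr_E$.

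There is no real obstacle here: the only subtlety is the use of odd characteristic to invert $2$, which is precisely why that hypothesis was imposed on $F$ in the conventions section. The argument is two lines once one thinks to conjugate.
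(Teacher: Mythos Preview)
Your proof is correct and is exactly the argument the paper has in mind: the paper does not spell out a proof at all, merely noting that the proposition ``follows from $\ofr_E$ being stable under $\Gal(E/F)$'' and placing a \qedsymbol, and your conjugate-then-add-and-subtract argument (using that $2$ is a unit in odd characteristic) is precisely how one unpacks that remark.
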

\section{Symmetric Space Representations and Unitary Groups}\label{sec:introunitary}
\subsection{Symmetric Spaces}

In this section we define the symmetric space representation and recall some facts we will need.

\begin{defn}
	Let $\theta:G\to G$ be an involution of a reductive algebraic group over a field $F$. We abuse notation by writing $\theta$ also for the differential of $\theta$. We define the \df{symmetric space representation} to be the representation of $G_0 = (G^\theta)^\circ$ on the $-1$-eigenspace $\gfr_1 = \{ x \in \gfr : \theta(x) = -x\}$.
\end{defn}

The Lie subalgebra $\gfr_0 = \gfr^\theta$ of fixed points also plays and important role. For $x\in \gfr_1$, if $\dim \zfr_{\gfr_0}(x) \leq \dim \zfr_{\gfr_0}(y)$ for all $y\in \gfr_1$, then we say that $x$ is \df{regular}. This is equivalent to the orbit $G_0\cdot x$ having minimal dimension. For this and further facts, the reader is referred to the paper~\cite{levyInvolutions}.

\subsection{$\UU_2\times\UU_2\hookrightarrow \UU_4$ and Regular Elements}
Recall that $F$ is a complete nonarchimedean local field of positive characteristic and $E$ is an unramified quadratic extension (see \S\ref{sec:introfirst}). Define the $n\times n$ matrix
\begin{linenomath*}\begin{align*}J_n=
	\begin{pmatrix}
		~ & ~ & 1\\
		~ & \diagup & ~\\
		1 & ~ & ~
	\end{pmatrix}.
\end{align*}\end{linenomath*}
The $n\times n$ unitary group functor is defined for all $F$-algebras $R$ by
\begin{linenomath*}\begin{align*}
	\UU_n(R) = \{ g\in \GL_4(R\otimes_F E) : J_n\overline{g}^{-t}J_n = g\}
\end{align*}\end{linenomath*}
and its Lie algebra is the functor given by
\begin{linenomath*}\begin{align*}
	\gfr:= \ufr_n(R) = \{ x\in \gl_n(R\otimes_F E): -J_n\overline{x}^t J_n = x\}.
\end{align*}\end{linenomath*}
From now on we consider the case $n =4$. We let $\theta:\Res{E/F}\GL_4\to\Res{E/F}\GL_4$ be conjugation by 
\begin{linenomath*}\begin{align*}
	\theta=\begin{pmatrix}
		& &  &1\\
		& & -1 &\\
		& -1 & & \\
		1 &  & & 
	\end{pmatrix}
\end{align*}\end{linenomath*}
which by abuse of notation we also call $\theta$. Since $\theta J = J\theta$, the involution $\theta$ on $\Res{E/F}\GL_4$ gives a well-defined involution on $\UU_4$. For computational purposes, it is necessary to write down explicitly the $F$-points of the $-1$-eigenspace $\gfr_1$ in terms of matrices. This can be most easily done by observing that
\begin{linenomath*}\begin{align*}
	\gl_4(E)(-1) = \{ x\in \gl_4(E) : \theta(x) = -x\} = \left\{ 
	\begin{pmatrix}
		x_{11} & x_{12} & x_{13} & x_{14} \\
		x_{21} & x_{22} & x_{23} & x_{24} \\
		x_{24} & -x_{23} & -x_{22} & x_{21} \\
		-x_{14} & x_{13} & x_{12} & -x_{11}
	\end{pmatrix} : x_{ij}\in E
	\right\}
\end{align*}\end{linenomath*}
so that $\gfr_1$ is then the fixed points under $x\mapsto -J_4\overline{x}^t J_4$, and the $F$-points are then
\begin{linenomath*}\begin{align}\label{eqn:liem1}
	\gfr_1(F) = \left\{ 
	\begin{pmatrix}
		x_{11} & x_{12} & x_{13} & x_{14} \\
		-\overline{x}_{12} & x_{22} & x_{23} & -\overline{x}_{13} \\
		-\overline{x}_{13} & -x_{23} & -x_{22} & -\overline{x}_{12} \\
		-x_{14} & x_{13} & x_{12} & -x_{11}
	\end{pmatrix} : \begin{matrix}x_{11},x_{22}\in F\\ x_{14},x_{23}\in F\delta\end{matrix}
	\right\}.
\end{align}\end{linenomath*}
The fixed-point group $\UU_4^\theta \cong \UU_2\times\UU_2 =: G_0$ acts on $\gfr_1$. Inside $\gfr_1(F)$ there is $\afr$, a maximal subspace of commuting semisimple elements such that
\begin{linenomath*}\begin{align*}
	\afr(F) = \left\{
	\begin{pmatrix}
		x & 0 & 0 & 0\\
		0 & y & 0 & 0\\
		0 & 0 & -y & 0\\
		0 & 0 & 0 & -x
	\end{pmatrix} : x,y\in F\right\}
\end{align*}\end{linenomath*}
We only consider regular semisimple elements in $\afr(F)$. If $\gamma = \diag(x,y,-y,-x)\in \afr(F)$, then $\gamma$ is regular if and only if $x,y\in F^\times$ and $x\not=\pm y$. For any regular semisimple element $\gamma\in\gfr_1(F)$ with centraliser $I_\gamma$ in $\UU_2\times\UU_2$, and for any compactly supported complex-valued smooth function $f$ on $\gfr_1(F)$, we define the orbital integral
\begin{linenomath*}\begin{align*}
	\Ocl_\gamma(f) := \int_{I_\gamma(F)\backslash \UU_2\times\UU_2(F)} f(\Ad(g^{-1})\gamma)\frac{\dd g}{\dd g_\gamma}.
\end{align*}\end{linenomath*}

\subsection{Stable Conjugacy} Fix a regular $\gamma = \diag(x,y,-y,-x)\in\afr(F)$. The stable conjugacy class of $\gamma$ in $\gfr_1(F)$ by definition is $G_0(\overline{F})\gamma\cap \gfr_1(F)$. When using cohomology, it is useful to express the stable conjugacy class of $\gamma$ as
\begin{linenomath*}\begin{align*}
	\{ \Ad(g)\gamma : g\in G_0(\overline{F})\text{~and~}g^{-1}\sigma(g)\in I_\gamma(\overline{F})\text{~for all~} \sigma\in \Gal(\overline{F}/F) \}.
\end{align*}\end{linenomath*}
In this section, we explicitly decompose the stable conjugacy class of $\gamma$ into $G_0(F)$-conjugacy classes. As before, we denote by $I_\gamma$ the stabiliser of $\gamma$ in $G_0$. The inclusion $I_\gamma\to G_0$ gives rise to a long exact sequence of pointed sets
\begin{linenomath*}\begin{align*}
	1\to I_\gamma(F)\to G_0(F)\to (G_0/I_\gamma)(F)\to H^1(F,I_\gamma)\to H^1(F,G_0)
\end{align*}\end{linenomath*}
in nonabelian cohomology. One easily checks that the map $\Ad(g)\gamma\mapsto (\sigma\mapsto g^{-1}\sigma(g))$ is a well-defined bijection from the set of conjugacy classes of $\gamma$ in the stable conjugacy class to $\Dcl:=\ker[H^1(F,I_\gamma)\to H^1(F,G_0)]$. Using this correspondence, we can compute explicit representatives for the conjugacy classes within the stable conjugacy class of $\gamma$.

We first compute
\begin{linenomath*}\begin{align*}
	I_\gamma = \left\{ 
	\begin{pmatrix}
		a_{11} & 0 & 0 & 0\\
		0 & a_{22} & 0 & 0\\
		0 & 0 & a_{22} & 0\\
		0 & 0 & 0 & a_{11}
	\end{pmatrix} : a_{ii}\overline{a}_{ii} = 1
	\right\}.
\end{align*}\end{linenomath*}
In other words, $I_\gamma \cong \UU_1\times\UU_1$ (which suggests our choice of an endoscopic symmetric space). We can compute the Galois cohomology over the finite extension $E/F$ which splits $I_\gamma$. Doing this we find that
\begin{linenomath*}\begin{align*}
	H^1(F,I_\gamma) \cong \Z/2\times\Z/2.
\end{align*}\end{linenomath*}
A quick application of Tate-Nakayama duality shows that the kernel $\Dcl$ of $H^1(F,I_\gamma)\to H^1(F,G_0)$ is $\Z/2$. For the purposes of computations, let us be more explicit. Fix the isomorphism of algebras $E\otimes_F E\xrightarrow{\sim} E\oplus E$ given on pure tensors by $(a\otimes b)\mapsto (ab,\overline{a}b)$, where the multiplication on $E\oplus E$ is pointwise and the Galois action on the left factor of $E\otimes_FE$ translates to $\overline{(a,b)} = (b,a)$ in $E\oplus E$. 

Let $\sigma\in\Gal(E/F)$ be the nontrivial element. The nontrivial element in $\Dcl$ is then represented (for example) by the cocycle
\begin{linenomath*}
\begin{align*}
\sigma\mapsto  (\pi I_4,\pi^{-1}I_4)\in I_\gamma(E)
\end{align*}
\end{linenomath*}
The element in $(I_\gamma\backslash G_0)(E)$ that maps to the corresponding cohomology class is represented by $(B,\pi B)\in G_0(E)$ where
\begin{linenomath*}\begin{align*}
	B = \frac 12
	\begin{pmatrix}
\pi^{-1} + 1 & -\pi^{-1} + 1 & 0 & 0 \\
-\pi^{-1} + 1 & \pi^{-1} + 1 & 0 & 0 \\
0 & 0 & \pi^{-1} + 1 & \pi^{-1} - 1 \\
0 & 0 & \pi^{-1} - 1 & \pi^{-1} + 1	\end{pmatrix}.
\end{align*}\end{linenomath*}
Indeed, it maps to $\sigma\mapsto g^{-1}\sigma(g) = (\pi I_4,\pi^{-1}I_4)$. We remark to the unwary reader that $\sigma$ here acts on the \emph{right} of $E\otimes_F E$, which on $E\oplus E$ is the same $(a,b)\mapsto (\overline{b},\overline{a})$. Now $H^1(F,I_\gamma)\cong H^1(\Z/2,I_\gamma(E))$ and $I_\gamma(E) \cong \{ \diag( (a,a^{-1}),(b,b^{-1}),(b,b^{-1}),(a,a^{-1}) ) : a,b\in E^\times\}$, so we are just computing group cohomology of a cyclic group. We get
\begin{linenomath*}
\begin{align*}
	H^1(\Z/2,I_\gamma(E)) &\cong (E^\times/(E^\times)^2)\times (E^\times/(E^\times)^2)\\
	&\cong \Z/2\times \Z/2.
\end{align*}
\end{linenomath*}
Hence $\sigma\mapsto (\pi I_4,\pi^{-1}I_4)$ is a nontrivial cocycle because $\pi$ is not a square in $E$. We set $\gamma_\stc = \Ad(B)\gamma$. Now let $\kappa:\Dcl\to \{\pm1\}$ be a character. We have for any compactly supported smooth function $f$ on $\gfr_1(F)$ the $\kappa$-orbital integral
\begin{linenomath*}\begin{align*}
	\Ocl_\gamma^\kappa(f) = \int_{I_\gamma(F)\backslash G_0(F)} f(\Ad(g^{-1})\gamma)\dd g + \kappa(-1)\int_{I_{\gamma_\stc}(F)\backslash G_0(F)} f(\Ad(g^{-1})\gamma_\stc)\dd g.
\end{align*}\end{linenomath*}
We note that we can omit the centralisers of $\gamma$ and of $\gamma_\stc$ since their $F$-points are compact. We would like to compute this integral when $\kappa$ is the nontrivial character and when $f=\mathbf{1}$, the characteristic function of $\gfr_1(\ofr)$:
\begin{linenomath*}\begin{align*}
	\Ocl_\gamma^\kappa(\mathbf{1}) = \int_{G_0(F)} \mathbf{1}(\Ad(g^{-1})\gamma)\dd g - \int_{G_0(F)} \mathbf{1}(\Ad(g^{-1})\gamma_\stc)\dd g.
\end{align*}\end{linenomath*}
This is the goal of the remainder of the paper.
\subsection{Preliminaries on Integration}\label{sec:integPrelims}  We choose a parabolic $P$ so that we get an Iwasawa decomposition $G_0(F) = P(F)G_0(\ofr) = M(F)U(F)G_0(\ofr)$, where $M$ is a Levi subgroup of $P$ and $U$ is the unipotent radical of $P$, so we reduce the computation to one on $P(F) = M(F)U(F)$. Although this does simplify matters, since the centraliser of $\gamma$ is compact, we unfortunately \emph{cannot} use the method of descent that would otherwise make the computation significantly easier. Now, to specify a parabolic of $G_0$ is the same thing as giving a cocharacter $\lambda:\G_m\to G_0$. We use the cocharacter
\begin{linenomath*}\begin{align*}
	\G_m &\longrightarrow G_0\\
	r&\longmapsto \frac 12
	\begin{pmatrix}
		r + r^{-1} & 0 & -r + r^{-1} & 0 \\
0 & r + r^{-1} & 0 & r - r^{-1} \\
-r + r^{-1} & 0 & r + r^{-1} & 0 \\
0 & r - r^{-1} & 0 & r + r^{-1}
	\end{pmatrix}.
\end{align*}\end{linenomath*}
One verifies easily that $\lambda$ is both $\theta$-fixed and actually does land in $\UU_4$. This cocharacter uniquely specifies the parabolic $P = \{ g\in G_0 : \lim_{r\to 0} \lambda(r)g\lambda(r^{-1}) \text{~exists~}\}$. The unipotent radical of this parabolic is given by
$U = \{ g\in G_0 : \lim_{r\to 0} \lambda(r)g\lambda(r^{-1}) =1\}$
and we calculate the $F$-points of the unipotent radical to be isomorphic to $F\delta\times F\delta$ via
\begin{equation}\label{eqn:formUrad}\begin{split}
	F\delta\times F\delta &\longrightarrow U(F)\\
	(c,d)&\longmapsto
	\begin{pmatrix}
		c + 1 & d & c & -d \\
-d & -c + 1 & -d & c \\
-c & -d & -c + 1 & d \\
-d & -c & -d & c + 1
	\end{pmatrix}.\end{split}
\end{equation}
For integrating, we use the product Haar measure on $F\delta\times F\delta$ where on each factor $F\delta$ we choose a Haar measure so that $\ofr\delta$ has unit volume. A Levi subgroup of $P$ is the subgroup that centralises the cocharacter $\lambda$. We calculate its $F$-points to be isomorphic to $E^\times\times E^\times$ through the isomorphism
\begin{equation}\label{eqn:formLevi}\begin{split}
	E^\times \times E^\times&\longrightarrow M(F)\\
	(r_1,r_2)&\longmapsto\frac 14
	\left( \begin{smallmatrix}
	r_{1} + r_{2} + \overline{r}_1^{-1} + \overline{r}_2^{-1} & r_{1} + r_{2} - \overline{r}_1^{-1} - \overline{r}_2^{-1} & r_{1} - r_{2} - \overline{r}_1^{-1} + \overline{r}_2^{-1} & -r_{1} + r_{2} - \overline{r}_1^{-1} + \overline{r}_2^{-1} \\
r_{1} + r_{2} - \overline{r}_1^{-1} - \overline{r}_2^{-1} & r_{1} + r_{2} + \overline{r}_1^{-1} + \overline{r}_2^{-1} & r_{1} - r_{2} + \overline{r}_1^{-1} - \overline{r}_2^{-1} & -r_{1} + r_{2} + \overline{r}_1^{-1} - \overline{r}_2^{-1} \\
r_{1} - r_{2} - \overline{r}_1^{-1} + \overline{r}_2^{-1} & r_{1} - r_{2} + \overline{r}_1^{-1} - \overline{r}_2^{-1} & r_{1} + r_{2} + \overline{r}_1^{-1} + \overline{r}_2^{-1} & -r_{1} - r_{2} + \overline{r}_1^{-1} + \overline{r}_2^{-1} \\
-r_{1} + r_{2} - \overline{r}_1^{-1} + \overline{r}_2^{-1} & -r_{1} + r_{2} + \overline{r}_1^{-1} - \overline{r}_2^{-1} & -r_{1} - r_{2} + \overline{r}_1^{-1} + \overline{r}_2^{-1} & r_{1} + r_{2} + \overline{r}_1^{-1} + \overline{r}_2^{-1}	
	\end{smallmatrix}\right) 
\end{split}\end{equation}
Again, for integration, we use the product Haar measure on $E^\times\times E^\times$ so that $\ofr_E^\times$ in $E^\times$ has unit volume. 

We note that multiplying the matrix in~\ref{eqn:formLevi} either on the left or the right by the matrix that represents the cocycle is the same matrix but with $\overline{r}_i$ replaced with $\pi\overline{r}_i$ for $i=1,2$. Thus, in any expressions that are a function of $m\gamma m^{-1}$ for $m\in M$, making this replacement gives us the equations for $m\gamma_\stc m^{-1}$.

In making these reductions, we are left to evaluate the integral
\begin{linenomath*}\begin{align*}
	\Ocl_\gamma^\kappa(\mathbf{1}) = \int_{M(F)}\int_{U(F)} \mathbf{1}(\Ad(u^{-1})\Ad(m^{-1})\gamma)\dd u\dd m - \int_{M(F)}\int_{U(F)} \mathbf{1}(\Ad(u^{-1})\Ad(m^{-1})\gamma_\stc)\dd u\dd m
\end{align*}\end{linenomath*}
where the Haar measures are chosen so that $U(\ofr)$ and $M(\ofr)$ each have unit volume. We now examine an element of the form $Y = \Ad(u^{-1})\Ad(m^{-1})\gamma$, where $u$ is a matrix as in \eqref{eqn:formUrad} and $m$ is a matrix as in \eqref{eqn:formLevi}. Since $Y\in\gfr_1$, by the explicit form of $\gfr_1$ in \eqref{eqn:liem1}, we see that $Y\in\gfr_1(\ofr)$ exactly when $v = [Y_{11},Y_{12},Y_{13},Y_{14},Y_{22},Y_{23}]^t\in\ofr^6$. And, $v\in \ofr^6$ exactly when $Av\in\ofr^6$ for any $A\in \GL_6(\ofr)$. In particular, we take
\begin{linenomath*}\begin{align*}
			A = 
		\begin{pmatrix}
			0 & -1 & 1 & 0 & -1 & 1 \\
1 & -1 & -1 & -1 & 0 & 0 \\
0 & 1 & 1 & 0 & -1 & -1 \\
1 & 1 & -1 & 1 & 0 & 0 \\
1 & 1 & 1 & -1 & 0 & 0 \\
1 & -1 & 1 & 1 & 0 & 0
		\end{pmatrix}, & & Av =  	\begin{pmatrix}
			-Y_{12} + Y_{13} - Y_{22} + Y_{23}\\
			Y_{11} - Y_{12} - Y_{13} - Y_{14}\\
			Y_{12} + Y_{13} - Y_{22} - Y_{23}\\
			Y_{11} + Y_{12} - Y_{13} + Y_{14}\\
			Y_{11} + Y_{12} + Y_{13} - Y_{14}\\
			Y_{11} + Y_{13} - Y_{12} + Y_{14}
		\end{pmatrix}.
\end{align*}\end{linenomath*}
We calculate that $\det(A) = -32$, so that indeed $A\in\GL_6(\ofr)$. Now it is simply a matter of calculating $Y= \Ad(u^{-1})\Ad(m^{-1})\gamma$, and each of the quantities in $Av$. Straightforward but tedious computations, and making the harmless change of variables ($d-c\rightsquigarrow c, d + c\rightsquigarrow d$) show that
\begin{linenomath*}\begin{align}
	\label{eqn:o1}-Y_{12} + Y_{13} - Y_{22} + Y_{23} &= \tfrac{r_2}{r_1}(x+y)(-c-\tfrac 12) + \tfrac 12(x-y)\tfrac{1}{r_1\overline{r_2}}\\
	\label{eqn:o2}\overline{Y}_{11} - \overline{Y}_{12} - \overline{Y}_{13} - \overline{Y}_{14} &= \tfrac{r_2}{r_1}(x+y)(-c+\tfrac 12) + \tfrac 12(x-y)\tfrac{1}{r_1\overline{r}_2}\\
	\label{eqn:o3}\overline{Y}_{12} + \overline{Y}_{13} - \overline{Y}_{22} - \overline{Y}_{23} &= \tfrac{r_2}{r_1}(x+y)(-d-\tfrac 12) + \tfrac 12(x-y)\overline{r}_1r_2\\
	\label{eqn:o4}Y_{11} + Y_{12} - Y_{13} + Y_{14} &= \tfrac{r_2}{r_1}(x+y)(-d+\tfrac 12) + \tfrac 12(x-y)\overline{r}_1r_2\\
	\begin{split}
		\label{eqn:o5}\overline{Y}_{11} + \overline{Y}_{12} + \overline{Y}_{13} - \overline{Y}_{14} &= 2 \tfrac{r_2}{r_1}cd(x+y) - d\left[ \tfrac{r_2}{r_1}(x+y) + \tfrac{1}{r_1\overline{r}_2}(x-y) \right] \\&- \overline{r}_1r_2c(x-y) + \tfrac 12\left[ \overline{r}_1r_2(x-y) + \tfrac{\overline{r}_1}{\overline{r}_2}(x+y) \right]
\end{split}\\
\begin{split}\label{eqn:o6}
	Y_{11} + Y_{13} - Y_{12} + Y_{14} &=  2\tfrac{r_2}{r_1}cd(x+y)-c\left[ \tfrac{r_2}{r_1}(x+y) + \overline{r}_1r_2(x-y) \right] \\&- \tfrac{1}{r_1\overline{r}_2}d(x-y) + \tfrac 12\left[ \tfrac{\overline{r}_1}{\overline{r}_2}(x+y) + \tfrac{1}{r_1\overline{r}_2}(x-y) \right]
\end{split}
\end{align}\end{linenomath*}
We note that we have also harmlessly replaced some terms by their conjugates. We now simplify these terms even further, preserving their status of integrality. Subtracting \eqref{eqn:o1} from \eqref{eqn:o2} shows that the integrality of these implies  that $\tfrac{r_2}{r_1}(x+y)$ is integral. In particular, it thus follows that if \eqref{eqn:o1} is integral then so is
\begin{linenomath*}\begin{align}\label{eqn:intermed1}
	\tfrac{r_2}{r_1}(x+y)(-c) + \tfrac 12(x-y)\tfrac{1}{r_1\overline{r}_2}.
\end{align}\end{linenomath*}
Multiplying this expression by the valuation-zero term $\pi^{-(v(r_1)+v(r_2))}r_1\overline{r}_2$ and applying Proposition~\ref{thm:basicintegrality} shows that each term of this expression is in fact integral exactly when the entire expression is integral. We can of course apply the same reasoning to \eqref{eqn:o3} and \eqref{eqn:o4}, which shows that \eqref{eqn:o1}-\eqref{eqn:o4} being integral is equivalent to the following expressions being integral:
\begin{linenomath*}\begin{align*}
	E_1 &= \tfrac{r_2}{r_1}(x+y) & 	E_4 &= (x-y)\tfrac{1}{r_1\overline{r}_2}\\
	E_2 &= \tfrac{r_2}{r_1}(x+y)c &		E_5 &= (x-y)\overline{r}_1r_2\\
	E_3 &= \tfrac{r_2}{r_1}(x+y)d\\
\end{align*}\end{linenomath*}
We observe also that if we subtract \eqref{eqn:o4} from \eqref{eqn:o5}, we get the same thing as subtracting \eqref{eqn:o2} from \eqref{eqn:o6}, both differences being equal to
\begin{linenomath*}\begin{align}\label{eqn:o5o6replacement}
	2\tfrac{r_2}{r_1}cd(x+y) -(x-y)\left[ c\overline{r}_1r_2 + \tfrac{d}{r_1\overline{r}_2} \right] + \tfrac{1}{2}\tfrac{\overline{r}_1}{\overline{r}_2}(x+y)
\end{align}\end{linenomath*}
so we might as well replace \eqref{eqn:o5} and \eqref{eqn:o6} by \eqref{eqn:o5o6replacement}. We can again apply Proposition~\ref{thm:basicintegrality} to \eqref{eqn:o5o6replacement} multiplied by $\pi^{-(v(r_1)+v(r_2))}r_1\overline{r}_2$, which tells us that the integrality of \eqref{eqn:o5o6replacement} is actually equivalent to the integrality of these two:
\begin{linenomath*}\begin{align*}
	E_6 &= 4\tfrac{r_2}{r_1}cd(x+y) + \tfrac{\overline{r}_1}{\overline{r_2}}(x+y)\\
	E_7 &= (x-y)\left[ c\overline{r}_1r_2 + \tfrac{d}{r_1\overline{r}_2} \right]
\end{align*}\end{linenomath*}
We have come to the end of our simplifications on the conditions that determine whether $Y = \Ad(u^{-1})\Ad(m^{-1})\gamma$ is integral.
\newpage
\subsection{Elimination of Fiendish Cases}\label{sec:elimination}

We have determined in \S\ref{sec:integPrelims} expressions $E_1,E_2,\dots,E_7$ that are integral exactly when $\Ad(u^{-1})\Ad(m^{-1})\gamma$ is integral. We denote by $E_1^\stc,E_2^\stc,\dots,E_7^\stc$ the corresponding equations for $\gamma_\stc$. Recall that to get the conditions for $\gamma_\stc$, we just replace $\overline{r}_i$ by $\pi\overline{r}_i$ for $i=1,2$ in $E_1,\dots,E_7$. For the remainder of the paper, we set
\begin{linenomath*}\begin{align*}
h &= v(r_1) + v(r_2),\\
V_m &= v(x-y)\\ V_p &= v(x+y).
\end{align*}\end{linenomath*}
To aid the reader, we list $E_1,\dots,E_7$ and their stably conjugate versions, along with their valuations:
\begin{linenomath*}\begin{align*}
	E_1 &= \tfrac{r_2}{r_1}(x+y) & E_1^\stc &= \tfrac{r_2}{r_1}(x+y)\\
	v(E_1) &= h + V_p - 2v(r_1) & v(E_1^\stc) &= h + V_p - 2v(r_1)\\
	E_2 &= \tfrac{r_2}{r_1}(x+y)c& E_2^\stc &= \tfrac{r_2}{r_1}(x+y)c\\
	v(E_2) &= h + V_p + v(c) - 2v(r_1) & v(E_2^\stc) &= h + V_p +v(c)- 2v(r_1)\\
	E_3 &= \tfrac{r_2}{r_1}(x+y)d& E_3^\stc &= \tfrac{r_2}{r_1}(x+y)d\\
	v(E_3) &= h + V_p  + v(d)- 2v(r_1) & v(E_3^\stc) &= h + V_p + v(d)- 2v(r_1)\\
	E_4 &= (x-y)\tfrac{1}{r_1\overline{r}_2} &E_4^\stc &= (x-y)\tfrac{1}{\pi r_1\overline{r}_2}\\
	v(E_4) &= V_m - h & v(E_4^\stc) &= V_m - h - 1\\
	E_5 &= (x-y)\overline{r}_1r_2 & E_5^\stc &= (x-y)\pi\overline{r}_1r_2\\
	v(E_5) &= V_m + h & v(E_5) &= V_m + h + 1\\
	E_6 &= 4\tfrac{r_2}{r_1}cd(x+y) + \tfrac{\overline{r}_1}{\overline{r_2}}(x+y) & 
	E_6^\stc &= 4\tfrac{r_2}{r_1}cd(x+y) + \tfrac{\overline{r}_1}{\overline{r_2}}(x+y)\\
	v(E_6)&\geq \min\left\{ \begin{smallmatrix}h+V_p + v(c) + v(d) - 2v(r_1),\\ V_p - h + 2v(r_1) \end{smallmatrix}\right\} & v(E_6^\stc) &\geq \min\left\{\begin{smallmatrix} h+V_p + v(c) + v(d) - 2v(r_1),\\ V_p - h + 2v(r_1)\end{smallmatrix}\right\}\\
	E_7 &= (x-y)\left[ c\overline{r}_1r_2 + \tfrac{d}{r_1\overline{r}_2} \right] & E_7^\stc &= (x-y)\left[ c\pi\overline{r}_1r_2 + \tfrac{d}{\pi r_1\overline{r}_2} \right]\\
	v(E_7)&\geq \min\left\{ \begin{smallmatrix}V_m + v(c) + h,\\ V_m + v(d) - h\end{smallmatrix} \right\} & v(E_7^\stc) &\geq \min\left\{ \begin{smallmatrix}V_m + v(c) + h + 1,\\ V_m + v(d) - h - 1\end{smallmatrix} \right\}
\end{align*}\end{linenomath*}
We see that only the fourth, fifth, and seventh expressions differ between $\gamma$ and $\gamma_\stc$. We observe that the difficulties will occur mainly with the sixth and seventh, since they are sums. \emph{In this section}, we eliminate some of the more fiendish difficulties to prepare the way for the main calculation in \S\ref{sec:calculations}. In order to lessen the wordiness and symbolism in the sequel, the \emph{summands of $E_6$} will refer to the two terms $4\tfrac{r_2}{r_1}cd(x+y)$ and $\tfrac{\overline{r}_1}{\overline{r_2}}(x+y)$. Similarly, the \emph{summands of $E_7$} will refer to the two terms: $(x-y)c\overline{r}_1r_2$ and $(x-y)\tfrac{d}{r_1\overline{r}_2}$. We also use this terminology, suitably modified, for the stably conjugate versions. For example, if $E_6$ is integral, then we know that either both summands are integral or neither are. These possibilities for $E_6$ and $E_7$ break down the computation into various cases, and the next proposition shows that the worst of these actually cannot occur.
\begin{lem}\label{thm:worstcase}
	If none of the summands in $E_6$ and $E_7$ are integral, then $E_6$ and $E_7$ cannot be simultaneously integral.
\end{lem}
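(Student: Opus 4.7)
The plan is to suppose for contradiction that $E_6$ and $E_7$ are both integral while none of their four summands are integral. Under this hypothesis, each of $E_6$, $E_7$ is a sum of two terms of equal negative valuation whose leading parts cancel modulo $\pi$, so the proof splits into two layers: first extract the valuation equalities, then extract and combine the leading-order cancellations modulo $\pi$.

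At the valuation level, equating the valuations of the two summands of $E_6$ gives $v(c)+v(d) = 4v(r_1) - 2h$, and the analogous equality for $E_7$ gives $v(d) - v(c) = 2h$. The unique solution is $v(c) = -2v(r_2)$, $v(d) = 2v(r_1)$. With these values the common summand valuations become $V_p - (v(r_2)-v(r_1))$ in $E_6$ and $V_m - (v(r_2)-v(r_1))$ in $E_7$, and the non-integrality hypotheses reduce to $V_p < v(r_2) - v(r_1)$ (the stronger of the two by near-singularity $V_p > V_m$). This single inequality on $v(r_1), v(r_2), V_p$ is internally consistent, so valuation data alone cannot produce the contradiction; the leading-order cancellation must be exploited.

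Write $r_i = \pi^{v(r_i)} u_i$ with $u_i \in \ofr_E^\times$, $c = \pi^{v(c)} c_0$ and $d = \pi^{v(d)} d_0$ with $c_0,d_0 \in \ofr_E^\times \cap F\delta$, and $x\pm y = \pi^{V_\pm} e_\pm$ with $e_\pm \in \ofr^\times$. Substituting into the integrality conditions for $E_6$ and $E_7$, dividing out the common $\pi$-power, clearing the valuation-zero units (multiplying through by $u_1 \overline{u}_2$ and using Proposition~\ref{thm:basicintegrality} to separate $F$-parts from $F\delta$-parts), and reducing modulo $\pi$ produces
\begin{align*}
4 c_0 d_0 N(u_2) + N(u_1) &\equiv 0 \pmod{\pi},\\
c_0 N(u_1) N(u_2) + d_0 &\equiv 0 \pmod{\pi}.
\end{align*}
Using the second congruence to eliminate $d_0$ in the first and then dividing out the unit $N(u_1)$ yields $(2 c_0 N(u_2))^2 \equiv 1 \pmod{\pi}$. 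Writing $c_0 = \lambda \delta$ with $\lambda \in \ofr^\times$ (possible since $c_0 \in F\delta$ has valuation $0$ and $v(\delta) = 0$) converts this into $\delta^2 \equiv (2\lambda N(u_2))^{-2} \pmod{\pi}$, exhibiting $\delta^2$ as a nonzero square in the residue field $\ofr/\mfr$.

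This is the contradiction: since $E/F$ is unramified quadratic with $E = F(\delta)$, the image of $\delta$ generates the residue field extension of $\ofr_E/\pi\ofr_E$ over $\ofr/\mfr$, so $\delta^2$ reduces to a non-square in $\ofr/\mfr$. The main obstacle is carrying out the modular reductions carefully, since $E_6$ and $E_7$ mix contributions from $F$, $F\delta$, and $E$, and the argument depends crucially on recognising that the final congruence $(2c_0 N(u_2))^2 \equiv 1$ lives in the residue subfield $\ofr/\mfr$ (rather than merely in $\ofr_E/\pi\ofr_E$), where ``$\delta^2$ is a square'' is genuinely impossible.
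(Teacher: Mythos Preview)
Your proof is correct and takes a genuinely different route from the paper's. Both arguments begin identically, deducing $v(c)=-2v(r_2)$ and $v(d)=2v(r_1)$ from the equal-valuation conditions on the summands of $E_6$ and $E_7$. From there the paper multiplies $E_6$ and $E_7$ by carefully chosen factors of valuation $V_m$ and $V_p$ respectively so that the leading $4c\pi^{h+V_p+V_m}$ terms match; their difference then factors as $\tfrac{\pi^{h+V_m+V_p}}{dN(r_1)N(r_2)}(N(r_1)-2d)(N(r_1)+2d)$. Since $N(r_1)\in F$ and $2d\in F\delta$ have the same valuation $v(d)=2v(r_1)$ and cannot cancel (Proposition~\ref{thm:basicintegrality}), the valuation of this difference is exactly $V_m+V_p+v(d)-h$, and comparing with $\min(V_m,V_p)$ forces $V_p+v(d)-h\geq 0$ (when $V_p\geq V_m$) or $V_m+v(d)-h\geq 0$ (when $V_p<V_m$), directly contradicting non-integrality of a summand of $E_6$ or $E_7$. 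Your approach instead passes to the residue field: reducing the unit parts of $E_6$ and $E_7$ modulo $\pi$ yields two congruences over $\F_q$ which combine to $\delta^2\equiv(2\lambda N(u_2))^{-2}$, impossible since $\overline{\delta}=-\delta$ gives $\delta^{q-1}\equiv -1$ and hence $\delta^2$ is a non-square in $\F_q$. Your argument is a bit slicker in that it avoids both the case split on the sign of $V_p-V_m$ and the somewhat ad hoc choice of normalising multipliers; the paper's argument stays entirely within the valuation bookkeeping already in use and has the minor advantage of identifying exactly which summand is forced to be integral. (A small remark: your invocation of Proposition~\ref{thm:basicintegrality} is not really needed, since after multiplying by $u_1\overline{u}_2$ each congruence already lies wholly in $F$ or wholly in $F\delta$.)
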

\begin{proof}
	We proceed by contradiction, assuming that none of the summands in $E_6$ and $E_7$ are integral, but that both $E_6$ and $E_7$ are integral. In this case, the valuation of the first summand must be equal to the valuation of the second in $E_6$, and the same is true of $E_7$. Thus, we get a pair of equations:
	\begin{linenomath*}\begin{align*}
		2[v(r_2) - v(r_1)] + v(c) + v(d) &= 0,\\
		2[v(r_2) + v(r_1)] + v(c) - v(d) &= 0.
	\end{align*}\end{linenomath*}
	In particular, $v(c) = -2v(r_2)$ and $v(d) = 2v(r_1)$. Hence
	\begin{linenomath*}\begin{align*}
		v\left( \tfrac{r_2}{r_1}d(x+y)\pi^{-(h+V_p+V_m)} \right) = -V_m.
	\end{align*}\end{linenomath*}
	Multiply $E_6$ by the inverse of the expression in $v(-)$ to get
	\begin{linenomath*}\begin{align}\label{eqn:proof1eq1}
		4c\pi^{h+V_p+V_m} + \frac{N(r_1)\pi^{h+V_p+V_m}}{dN(r_2)}\in \pi^{V_m}\ofr.
	\end{align}\end{linenomath*}
	Using the same procedure on $E_7$ gives
	\begin{linenomath*}\begin{align}\label{eqn:proof1eq2}
		4c\pi^{h+V_m+V_p} + \frac{4d\pi^{h+V_m+V_p}}{N(r_1)N(r_2)}\in\pi^{V_p}\ofr.
	\end{align}\end{linenomath*}
	We take the difference between \eqref{eqn:proof1eq1} and \eqref{eqn:proof1eq2}, obtaining
	\begin{linenomath*}\begin{align}\label{eqn:proof1eq3}
		\frac{\pi^{h+V_m+V_p}}{dN(r_1)N(r_2)}(N(r_1) - 2d)(N(r_1)+ 2d).
	\end{align}\end{linenomath*}
	We note that $N(r_1)\in F$ whereas $2d\in F\delta$. Hence, the valuation of $N(r_1)\pm 2d$ is precisely $v(d)$. Thus, the valuation of \eqref{eqn:proof1eq3} is $V_m + V_p + v(d) - h$. There are two cases to consider: either $V_p \geq V_m$ or $V_p < V_m$.

	\noindent {\bfseries Case 1: $V_p \geq V_m$.} Then \eqref{eqn:proof1eq3} lies in $\pi^{V_m}\ofr$, or in other words, $v(d) + V_p + V_m - h \geq V_m$. Simplifying, we get $v(d) + V_p - h\geq 0$. On the other hand, the first summand of $E_6$ also has valuation $v(d) + V_p - h$, showing that this summand is integral, which is a contradiction.

	\noindent {\bfseries Case 2: $V_p < V_m$}. Then \eqref{eqn:proof1eq3} lies in $\pi^{V_p}\ofr$, or in other words, $v(d) + V_p + V_m - h \geq V_p$. Hence $v(d) + V_m - h\geq 0$, but this is the valuation of the second summand of $E_7$, which is again a contradiction.
\end{proof}
\begin{rem}
	In this paper, Case 2 in the above proof does not actually occur since we will assume for the actual computation that $V_p > V_m$, but we have included the more general statement for completeness.
\end{rem}
The reader will have no trouble applying the same argument to prove the stably conjugate version.
\begin{lem}\label{thm:worstcasestc}
	If none of the summands in $E_6^\stc$ and $E_7^\stc$ are integral, then $E_6^\stc$ and $E_7^\stc$ cannot be simultaneously integral.
\end{lem}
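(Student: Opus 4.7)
The plan is to mimic the proof of Lemma~\ref{thm:worstcase} step for step, tracking the two extra factors of $\pi$ that distinguish $E_7^\stc$ from $E_7$ while noting that $E_6^\stc$ is literally equal to $E_6$. Assuming for contradiction that no summand of $E_6^\stc$ or $E_7^\stc$ is integral while both sums are, the valuations of the two summands of each expression must coincide. From $E_6^\stc$ this gives $2(v(r_2)-v(r_1)) + v(c) + v(d) = 0$, exactly as in the previous lemma; from $E_7^\stc$ the balance condition becomes $v(c) - v(d) + 2h + 2 = 0$, the $+2$ coming from the two extra $\pi$-factors. Solving the system yields $v(c) = -2v(r_2) - 1$ and $v(d) = 2v(r_1) + 1$.

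Next I would multiply $E_6^\stc$ and $E_7^\stc$ by unit multiples of $\pi^{V_m}$ and $\pi^{V_p}$ respectively, chosen so as to parallel the manipulations used in Lemma~\ref{thm:worstcase}, obtaining
\begin{linenomath*}\begin{align*}
	4c\pi^{h+V_p+V_m+1} + \tfrac{N(r_1)\pi^{h+V_p+V_m+1}}{d\,N(r_2)} &\in \pi^{V_m}\ofr,\\
	4c\pi^{h+V_p+V_m+1} + \tfrac{4d\,\pi^{h+V_p+V_m-1}}{N(r_1)N(r_2)} &\in \pi^{V_p}\ofr.
\end{align*}\end{linenomath*}
Subtracting cancels the common $4c$-term and the resulting difference factors as
\begin{linenomath*}\begin{align*}
	\tfrac{\pi^{h+V_p+V_m-1}}{d\,N(r_1)N(r_2)}\bigl(\pi N(r_1) - 2d\bigr)\bigl(\pi N(r_1) + 2d\bigr),
\end{align*}\end{linenomath*}
which is the analogue of~\eqref{eqn:proof1eq3} with $N(r_1)$ replaced by $\pi N(r_1)$, a natural consequence of the $\pi$-shift in $E_7^\stc$.

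Since $\pi N(r_1)\in F$ and $2d\in F\delta$, Proposition~\ref{thm:basicintegrality} yields $v(\pi N(r_1)\pm 2d)=\min(2v(r_1)+1,v(d))=2v(r_1)+1$, so the displayed difference has valuation exactly $V_p+V_m+v(r_1)-v(r_2)$, the same quantity that appeared in Lemma~\ref{thm:worstcase}. I would then split into the two cases $V_p\geq V_m$ and $V_p<V_m$; in each case, the requirement that the difference lie in $\pi^{\min(V_m,V_p)}\ofr$ forces, respectively, the second summand of $E_6^\stc$ or the second summand of $E_7^\stc$ to be integral, contradicting the standing assumption.

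The only real obstacle is careful bookkeeping: one must verify that the $\pm 1$ shifts produced by the $\pi$-factors in $E_7^\stc$, together with the corresponding $\pm 1$ shifts they force on $v(c)$ and $v(d)$, propagate consistently through the valuation accounting so that the final valuation of the difference and the case-analysis bounds still match those of Lemma~\ref{thm:worstcase}. Once this is verified, the argument is structurally identical to that of the non-stable case.
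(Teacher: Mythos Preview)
Your proposal is correct and follows exactly the route the paper intends: the paper's own ``proof'' of Lemma~\ref{thm:worstcasestc} merely instructs the reader to rerun the argument of Lemma~\ref{thm:worstcase}, and you have carried this out faithfully, tracking the $\pi$-shifts in $E_7^\stc$ to obtain $v(c)=-2v(r_2)-1$, $v(d)=2v(r_1)+1$, the factored difference with $\pi N(r_1)$ in place of $N(r_1)$, and the same final valuation $V_m+V_p+v(r_1)-v(r_2)$ leading to the identical two-case contradiction. The bookkeeping is clean; in particular your use of Proposition~\ref{thm:basicintegrality} to pin down $v(\pi N(r_1)\pm 2d)$ exactly (even though the two terms have equal valuation) is justified since an element of $F$ plus an element of $F\delta$ always has valuation equal to the minimum.
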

The next lemma allows us a significant simplification if we stick with a ``limiting case'' for $\gamma$.
\begin{lem}\label{thm:secondworstcase}
	Suppose that $V_p > V_m$, that $E_1,E_2,\dots,E_7$ are integral, and that not all the $E_i^\stc$ are integral (the last condition being equivalent to: $E_7^\stc$ is not integral). Under these conditions, if the summands of $E_7$ are integral, then the summands of $E_6$ are integral as well.\qedsymbol
\end{lem}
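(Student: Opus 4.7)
The plan is to argue by contradiction, using only the nonarchimedean property of the valuation on $E$ together with the lower bound on $v(c)+v(d)$ that comes from the integrality of the individual summands of $E_7$.

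First I would unpack the hypotheses. Integrality of the two summands of $E_7$ gives $V_m+v(c)+h\ge 0$ and $V_m+v(d)-h\ge 0$; adding these yields the single inequality $v(c)+v(d)\ge -2V_m$, which is all that is really needed downstream. For orientation one can also note that the first summand of $E_7^\stc$, with valuation $V_m+v(c)+h+1\ge 1$, is automatically integral, so the failure of $E_7^\stc$ forces its second summand to be non-integral, pinning $v(d)=h-V_m$ exactly. This is a pleasant consistency check but is not strictly needed for the main argument.

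Next I would examine $E_6$, whose two summands have valuations $h+V_p+v(c)+v(d)-2v(r_1)$ and $V_p-h+2v(r_1)$. Since $E_6$ is integral and equals the sum of precisely these two elements of $E$, the nonarchimedean property forces either both summands to be integral or both to be non-integral. Suppose for contradiction that both are non-integral. Then the two valuations must be equal, since otherwise $v(E_6)$ would equal the strict minimum and be negative. Adding the two (equal) valuations gives $2v_0 = 2V_p + v(c)+v(d)$, so the common valuation is $v_0 = V_p+\tfrac{1}{2}(v(c)+v(d))$. Non-integrality demands $v_0<0$, i.e.\ $v(c)+v(d)<-2V_p$. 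Combined with the bound $v(c)+v(d)\ge -2V_m$ from the previous paragraph, this forces $V_p<V_m$, contradicting the hypothesis $V_p>V_m$. Hence both summands of $E_6$ are integral.

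The main obstacle is conceptual rather than computational: one must recognise that the hypothetical cancellation scenario in $E_6$ pins the sum $v(c)+v(d)$ from above, while the summand-integrality of $E_7$ pins it from below, and that the strict inequality $V_p>V_m$ is exactly what closes the gap between these two bounds. Once this balance is identified, the proof reduces to a one-line nonarchimedean cancellation argument, with no further case analysis required.
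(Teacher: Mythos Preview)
Your argument is correct and is genuinely different from, and more economical than, the paper's proof. The paper proceeds by contradiction as you do, but then exploits the hypothesis that $E_7^\stc$ fails to pin down $v(d)=h-V_m$ exactly, combines this with the equal-valuation relation $v(c)+v(d)=4v(r_1)-2h$ coming from $E_6$, and then feeds these explicit values into the inequalities from $E_2$, $E_3$, a summand of $E_6$, and the first summand of $E_7$ to squeeze out $h-V_p-1\ge 2v(r_1)\ge h-V_m$, whence $V_m\ge V_p+1$. Your route bypasses all of this bookkeeping: you never touch $v(r_1)$, $E_2$, or $E_3$, and you do not need the $E_7^\stc$ hypothesis at all. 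By simply adding the two summand-valuations of $E_6$ the variable $2v(r_1)$ cancels, and by adding the two summand-inequalities of $E_7$ the variable $h$ cancels, leaving the clean sandwich $-2V_m\le v(c)+v(d)<-2V_p$. This shows that the lemma in fact holds under the weaker hypotheses ``$V_p>V_m$, $E_6$ integral, summands of $E_7$ integral,'' which is a mild strengthening of the stated result. The paper's approach, by contrast, keeps closer track of the individual quantities $v(c)$, $v(d)$, $v(r_1)$, which is consistent with the style of the surrounding case analysis but is not needed for this particular lemma.
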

\begin{proof}
	We suppose by contradiction that we have a solution that makes $E_1$ to $E_7$ integral, that the summands of $E_7$ are integral, but that the summands of $E_6$ are not integral. Before we list the inequalities in this case, let us make three observations.
	\begin{enumerate}
		\item From $E_2$, we get $h + V_p + v(c) - 2v(r_1) \geq 0$. However, the first summand of $E_6$ not being integral is equivalent to $h + V_p  +v(c) + v(d) -2v(r_1) < 0$. Hence $v(d)  < 0$. Repeating the argument with $E_3$ in place of $E_2$ shows $v(c) < 0$.
		\item The valuation of the first summand of $E_6$ is equal to the valuation of the second. This implies that $v(c)  +v(d) = 4v(r_1) - 2h$.
		\item   Since $E_4=E_4^\stc$, any solution of $E_1,\dots,E_7$ will also be a solution of the stably conjugate versions unless $v(d) = h-V_m$, so we evaluate only under this additional condition, and this implies based on our second observation that $v(c) = 4v(r_1) - 3h + V_m$.
	\end{enumerate}
	Hence we have the following inequalities, by using the substitutions in (1)-(3) in $E_2,E_3$, either term of $E_6$, and the first term of $E_7$ being integral:
	\begin{linenomath*}\begin{align*}
		2v(r_1) &\geq 2h - V_p - V_m\\
		2h + V_p - V_m &\geq 2v(r_1)\\
		h - V_p &> 2v(r_1)\\
		2v(r_1) &\geq h  -V_m
	\end{align*}\end{linenomath*}
	Or, more succinctly,
	\begin{linenomath*}\begin{align*}
		\min\{2h + V_p - V_m,h-V_p-1\}\geq 2v(r_1) \geq \max\{2h - V_p - V_m,h-V_m\}.
	\end{align*}\end{linenomath*}
	We see that $2h + V_p - V_m \geq h - V_p - 1$ is equivalent to $h + 2V_p - V_m +1\geq 0$. Since $V_p > V_m$, so that $h + 2V_p - V_m +1 \geq h + V_m + 1\geq 1$. Similarly, $h - V_m\geq 2h - V_m - V_m$ is equivalent to $V_p \geq h$, which is true again since $V_p > V_m$. Hence we have that,
	\begin{linenomath*}\begin{align*}
		h - V_p - 1\geq 2v(r_1) \geq h - V_m.
	\end{align*}\end{linenomath*}
	So $h - V_p - 1\geq h - V_m$, or equivalently, $V_m\geq V_p+1$, which is absurd.
\end{proof}
Again, the same argument will apply to the stably conjugate version.
\begin{lem}\label{thm:secondworstcasestc}
	Suppose that $V_p > V_m$, and that $E_1^\stc,E_2^\stc,\dots,E_7^\stc$ are integral, but that at least one of $E_1,\dots, E_7$ is not integral. If the summands of $E_7^\stc$ are integral, then the summands of $E_6^\stc$ are integral as well.\qedsymbol
\end{lem}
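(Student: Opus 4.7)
The plan is to run the contradiction template of Lemma~\ref{thm:secondworstcase} with the roles of $\gamma$ and $\gamma_\stc$ interchanged. I would suppose for contradiction that all of $E_1^\stc,\dots,E_7^\stc$ are integral, that at least one of $E_1,\dots,E_7$ is not integral, that the two summands of $E_7^\stc$ are individually integral, and yet that the summands of $E_6^\stc$ are not. The goal is to extract a double inequality on $2v(r_1)$ whose collapse contradicts $V_p>V_m$.

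The first two observations of the original proof transport directly. Since $E_i=E_i^\stc$ for $i\in\{1,2,3,6\}$, both $E_2$ and $E_3$ are integral; combined with non-integrality of the first summand of $E_6^\stc=E_6$, the same reasoning as step (1) in Lemma~\ref{thm:secondworstcase} yields $v(c)<0$ and $v(d)<0$. Since $E_6^\stc$ is integral with non-integral summands, the two summands share valuation, so $v(c)+v(d)=4v(r_1)-2h$, exactly as in step (2).

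The third observation needs the boundary case appropriate to the stably conjugate version. Comparing the two systems, the $\pi$-twist strengthens integrality of $E_5$ and of the first summand of $E_7$, weakens integrality of $E_4$ and of the second summand of $E_7$, and leaves $E_1,E_2,E_3,E_6$ fixed. Hence the only way for some $E_i$ to fail under the standing hypotheses is either $h=-V_m-1$ (so $E_5$ fails at its boundary) or $v(c)=-V_m-h-1$ (so the first summand of $E_7$ has valuation $-1$ while the second has valuation $\geq 1$, forcing $E_7$ itself to have valuation $-1$). In each sub-case I would plug the forced boundary value back into $v(c)+v(d)=4v(r_1)-2h$ to pin down the remaining valuation explicitly.

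Finally, I would assemble the inequalities coming from integrality of $E_2^\stc,E_3^\stc$, non-integrality of the first summand of $E_6^\stc$, and integrality of a summand of $E_7^\stc$, obtaining a squeeze on $2v(r_1)$ analogous to
\begin{linenomath*}
\begin{align*}
\min\{2h+V_p-V_m,\,h-V_p-1\}\geq 2v(r_1)\geq\max\{2h-V_p-V_m,\,h-V_m\}
\end{align*}
\end{linenomath*}
from the proof of Lemma~\ref{thm:secondworstcase}, but with $V_m$ shifted by $\pm 1$ in the appropriate slots to reflect the $\pi$-twist. Collapsing this inequality under $V_p>V_m$ forces $V_m\geq V_p+k$ for some positive integer $k$, the desired absurdity. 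The main obstacle is disciplined bookkeeping of the $\pi$-shifts in both sub-cases, so that the final pinch remains sharp enough to contradict $V_p>V_m$ regardless of which $E_i$ is the offending one.
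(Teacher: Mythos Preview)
Your proposal is correct and follows the paper's approach: the paper gives no separate proof for this lemma, merely asserting that ``the same argument will apply to the stably conjugate version,'' and you have correctly identified how to mirror the three observations and the squeeze on $2v(r_1)$.

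Two minor points of calibration. First, your sub-case $h=-V_m-1$ is in fact immediately vacuous and does not need the squeeze: integrality of the first summand of $E_7^\stc$ at $h=-V_m-1$ forces $v(c)\geq 0$, which already contradicts $v(c)<0$ from your observation (1). (The analogous boundary case $h=V_m$ in Lemma~\ref{thm:secondworstcase} is vacuous for the same reason, which the paper glosses over with the slightly inaccurate remark ``$E_4=E_4^\stc$''.) So only the sub-case $v(c)=-V_m-h-1$ needs the full argument. Second, in that remaining sub-case the binding inequalities turn out to be literally $h-V_p-1\geq 2v(r_1)\geq h-V_m$, identical to the original lemma rather than ``$V_m$ shifted by $\pm 1$''; the non-binding outer bounds do change form, but they are discarded in the same way as before using $V_p>V_m$ together with $-V_m-1\leq h\leq V_m-1$.
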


\section{Brute Force Calculations}\label{sec:calculations}
\subsection{Preliminaries}

We introduced in \S\ref{sec:introunitary} the orbital integral
\begin{linenomath*}
	\begin{align}\label{eqn:bruteorbits}
	\Ocl_\gamma^\kappa = \int_{G_0(F)}\mathbf{1}(\Ad(g)^{-1}\gamma)\dd g - \int_{G_0(F)}\mathbf{1}(\Ad(g)^{-1}\gamma_\stc)\dd g.
\end{align}
\end{linenomath*}
The integral $\int_{G_0(F)}\mathbf{1}(\Ad(g^{-1}\gamma)\dd g$ is the same as the measure of the set
\begin{align*}
	\{ (c,d,r_1,r_2) \in F\delta\times F\delta\times E^\times\times E^\times : E_1,\dots,E_7\text{~are integral~}\},
\end{align*}
and the analogous statement holds for the stably conjugate version. To evaluate the orbital integral, we do as follows: first, we find the measure of the subset of $F\delta\times F\delta\times E^\times\times E^\times$ such that all the $E_*$ are integral but at least one of $E_*^\stc$ is not integral. We refer to this as the $(1,0)$-case. Similarly, the $(0,1)$-case is when all of $E_*^\stc$ are integral but at least one of $E_*$ is not. We then take the measure of the $(1,0)$ case and subtract the measure of the $(0,1)$ case. 

In this section we carry out the calculation, evaluating the integral. Our strategy is to fix $h = v(r_1) + v(r_2)$, write down an expression for the measure of the solution set, and then sum over all the possibilities for $h$:

\begin{lem}
	The integrality of $E_4$ and $E_5$ is equivalent to the inequality $V_m \geq h \geq -V_m$. Similarly, the integrality of $E_4^\stc$ and $E_5^\stc$ is equivalent to $V_m-1\geq h \geq -V_m-1$. In either case, if $V_m < 0$, then neither of this equalities can be satisfied and hence the $\kappa$-orbital integral vanishes.\qedsymbol
\end{lem}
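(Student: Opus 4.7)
The proof is a direct unpacking of the valuations tabulated in \S\ref{sec:elimination}, so the plan is essentially to read off the relevant inequalities and then check the infeasibility in the case $V_m<0$.

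First I would recall that $v(E_4)=V_m-h$ and $v(E_5)=V_m+h$, so $E_4\in\ofr_E$ is equivalent to $h\leq V_m$ and $E_5\in\ofr_E$ is equivalent to $h\geq -V_m$. Combining these yields the two-sided bound $V_m\geq h\geq -V_m$. The stably conjugate version works identically: $v(E_4^\stc)=V_m-h-1$ and $v(E_5^\stc)=V_m+h+1$, so the joint integrality of $E_4^\stc$ and $E_5^\stc$ is equivalent to $V_m-1\geq h\geq -V_m-1$.

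Next I would address the vanishing statement. If $V_m<0$, then $V_m<-V_m$, so no integer $h$ can satisfy $V_m\geq h\geq -V_m$; likewise $V_m-1\geq -V_m-1$ is equivalent to $V_m\geq 0$, which again fails. Hence for $V_m<0$ the integrality of $\{E_1,\dots,E_7\}$ and of $\{E_1^\stc,\dots,E_7^\stc\}$ both fail identically. Since the two integrals defining $\Ocl_\gamma^\kappa(\mathbf{1})$ in \eqref{eqn:bruteorbits} are each the measure of a set that is now empty, both vanish, and therefore so does their difference.

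There is no real obstacle here; the only thing to be careful about is that the conditions on $E_4$ and $E_5$ involve only $r_1$ and $r_2$ through $h$, so the reduction to the single inequality on $h$ is clean and does not get entangled with the $c,d$ variables or with the more delicate $E_6,E_7$. In particular, the lemma is purely a statement about the necessary condition on $h$ coming from the ``easy'' terms $E_4,E_5$; it is this condition that will later cut off the sum over $h$ to a finite range in \S\ref{sec:calculations}.
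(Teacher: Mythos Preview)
Your proof is correct and is exactly the intended argument: the paper itself omits the proof entirely (the lemma ends with a bare \qedsymbol), so your direct reading of the valuations $v(E_4)=V_m-h$, $v(E_5)=V_m+h$, $v(E_4^\stc)=V_m-h-1$, $v(E_5^\stc)=V_m+h+1$ from the table in \S\ref{sec:elimination} and the obvious infeasibility check when $V_m<0$ is precisely what the author had in mind.
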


\begin{defn}
	We say that $\gamma$ is \emph{nearly singular} if $V_p > V_m$. 
\end{defn}

For the rest of this paper, we assume that $\gamma$ is nearly singular, which is relatively harmless since our calculation under this assumption still should give us the correct transfer factor, assuming that there is a sane version of endsocopy operating in the midst. At any rate, in view of Lemmas \ref{thm:worstcase},\ref{thm:worstcasestc},\ref{thm:secondworstcase}, and \ref{thm:secondworstcasestc}, we then have to consider two possibilities: all summands in $E_6$ and in $E_7$ are integral, and the summands of $E_6$ are integral but the summands of $E_7$ are not.

\subsection{Integer Arithmetic}\label{sec:intArithmetic} Here we state the properties of floor and ceiling functions we use. For any $r\in\R$ we write $\floor{r}$ and $\ceil{r}$ for the floor of $r$ and the ceiling of $r$ respectively. If $a,b\in \Z$, then we will frequently need the following facts that are easy to verify, but included for convenience in following lengthy computations:
\begin{linenomath*}\begin{align*}
	\bigl|\{ x\in \Z : a \geq 2x \geq b\}\bigr| &= \floor{\frac a 2} - \ceil{\frac b 2} + 1 =
	\begin{cases}
		\tfrac{a - b}{2} + 1 & \text{if $a,b$ are even}\\
		\tfrac{a-b}{2} & \text{if $a,b$ are odd}\\
		\tfrac{a-b+1}{2} & \text{if $a,b$ have opposite parity}
	\end{cases}\\
	\floor{\frac a 2} &= \ceil{\frac{a-1}2}\\
	\floor{\frac a 2}+1 &= \floor{\frac {a+2}{2}}\\
	\ceil{\frac a 2}+1 &= \ceil{\frac {a+2}{2}}
\end{align*}\end{linenomath*}

\subsection{Measures}

In all sections, $(c,d)\in F\delta\times F\delta$, and $F\delta$ has the Haar measure so that $\ofr\delta$ has unit volume. Then $F\delta\times F\delta$ has the product measure. Moreover, $r_1\in E^\times$, and $E^\times$ has the Haar measure so that $\ofr_E^\times$ has unit volume.

\subsection{All Summands Integral}\label{sec:calcAllSummands}

In this section, we evaluate case $(1,0)$  (resp. $(0,1)$) when all summands of $E_6$ and $E_7$ (resp. $E_6^\stc$ and $E_7^\stc$) are integral. We consider two cases: one where $h= V_m-1,\dots,-V_m$ for both integrals, and the other where $h = V_m$ for $\Ocl_\gamma$ and $h = -V_m - 1$ for $\Ocl_{\gamma_\stc}$. In order to make reading this section easier, here are the inequalities that must be satisfied in this case:

\begin{lem}
	Suppose all summands are integral. Then the inequalities defining the set that we must determine the measure of are:
	\begin{linenomath*}\begin{align*}
	\text{For $\Ocl_\gamma$} && \text{For $\Ocl_{\gamma_\stc}$}\\
	h + V_p - 2v(r_1) &\geq 0\\
	h + V_p + v(c) - 2v(r_1) &\geq 0\\
	h + V_p + v(d) - 2v(r_1) &\geq 0 & \text{Same as for $\Ocl_\gamma$}\\
	h + V_p + v(c) + v(d) - 2v(r_1) &\geq 0\\
	V_p - h + 2v(r_1) &\geq 0\\
	V_m + v(c) + h&\geq 0 & V_m + v(c) + h + 1&\geq 0\\
	V_m + v(d) - h &\geq 0 & V_m + v(d) - h -1 &\geq 0
\end{align*}\end{linenomath*}
\end{lem}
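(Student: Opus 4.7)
The plan is to read off each inequality directly from the valuation computations assembled in \S\ref{sec:integPrelims}, using the hypothesis that each summand of $E_6$ and $E_7$ is separately integral to turn the sum-dependent conditions on $E_6$ and $E_7$ into a pair of plain non-negativity inequalities each.

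Concretely, I would start with $E_1, E_2, E_3$: integrality is equivalent to the non-negativity of the valuations $h + V_p - 2v(r_1)$, $h + V_p + v(c) - 2v(r_1)$, and $h + V_p + v(d) - 2v(r_1)$, which give the first three inequalities. Since none of these expressions involves $\overline{r}_i$, the substitution $\overline{r}_i \mapsto \pi \overline{r}_i$ defining the stably conjugate version leaves them unchanged, which accounts for the ``Same as for $\Ocl_\gamma$'' entry in the table. Next I would turn to $E_6 = 4\tfrac{r_2}{r_1}cd(x+y) + \tfrac{\overline{r}_1}{\overline{r}_2}(x+y)$: its two summands have valuations $h + V_p + v(c) + v(d) - 2v(r_1)$ and $V_p - h + 2v(r_1)$, giving the fourth and fifth inequalities. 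These remain unchanged for $E_6^\stc$ because the first summand is independent of $\overline{r}_i$ and in the second the two factors of $\pi$ introduced by the substitution cancel.

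Finally, for $E_7 = (x-y)[c\overline{r}_1 r_2 + \tfrac{d}{r_1\overline{r}_2}]$, the two summands have valuations $V_m + v(c) + h$ and $V_m + v(d) - h$, yielding the last two inequalities in the $\Ocl_\gamma$ column. Passing to $E_7^\stc = (x-y)[c\pi\overline{r}_1 r_2 + \tfrac{d}{\pi r_1\overline{r}_2}]$ shifts the first valuation by $+1$ and the second by $-1$, recovering the last two inequalities in the $\Ocl_{\gamma_\stc}$ column.

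The only place requiring mild care is the invariance observation $\tfrac{\pi\overline{r}_1}{\pi\overline{r}_2} = \tfrac{\overline{r}_1}{\overline{r}_2}$ for $E_6$; once this is in hand, the entire lemma is a short bookkeeping exercise and presents no substantive obstacle.
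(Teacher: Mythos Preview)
Your proposal is correct and follows essentially the same approach as the paper: both simply read off the valuations of $E_1,\dots,E_7$ (and the summands of $E_6,E_7$) from the table in \S\ref{sec:elimination} and set them nonnegative. The one small point you omit, which the paper does include, is why no inequalities for $E_4,E_5$ (and their stably conjugate versions) appear: these are equivalent to the constraints $V_m \geq h \geq -V_m$ (resp.\ $V_m-1\geq h\geq -V_m-1$) already imposed on $h$ in this section, and so are automatically satisfied.
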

\begin{proof}
	We take the terms listed at the beginning of \S\ref{sec:elimination} and set the valuation of each of them to be greater than or equal to zero for $E_1$ to $E_5$ and $E_1^\stc$ to $E_5^\stc$, and set the valuation of each summand to be greater than or equal to zero for $E_6,E_7,E_6^\stc$ and $E_7^\stc$. We note that we have not written down the inequalities for $E_4,E_4^\stc,E_5$, or $E_5^\stc$ because these will automatically be integral given our assumptions on $h$.
\end{proof}

\subsubsection{Case $(1,0)$: The Integral $\Ocl_\gamma$ for $h = V_m$}

Our starting inequalities at the start of \S\ref{sec:calcAllSummands} reduce to the following:
\begin{linenomath*}\begin{align}
		V_m + V_p - 2v(r_1) &\geq 0\\
	V_m + V_p + v(c) - 2v(r_1) &\geq 0\\
	\label{eqn:exred1}V_m + V_p + v(d) - 2v(r_1) &\geq 0\\
	\label{eqn:exred2}V_m + V_p + v(c) + v(d) - 2v(r_1) &\geq 0\\
	V_p - V_m + 2v(r_1) &\geq 0\\
	2V_m + v(c) &\geq 0 \\
	v(d) &\geq 0
\end{align}\end{linenomath*}
We note that since $h=V_m$, the expressions $E_*^\stc$ cannot all be integral since in that case we must have $V_m-1\geq h$. Since $v(d) \geq 0$, we see that \eqref{eqn:exred1} and \eqref{eqn:exred2} are redundant, so we can eliminate them. In subsequent calculations, we shall frequently eliminate the obvious redundant inequalities without note. There are two cases to consider: $v(c) \geq 0$ and $v(c) < 0$.

\case{1}{$v(c)\geq 0$}. The remaining inequalities are
\begin{linenomath*}\begin{align*}
	V_m + V_p\geq 2v(r_1) &\geq V_m-V_p\\
	v(c) &\geq 0\\
	v(d) &\geq 0.
\end{align*}\end{linenomath*}
At this point, the reader may wish to review \S\ref{sec:intArithmetic} containing various identities with floor and ceiling functions. Using these we see that the measure of the corresponding set of solutions is 
\newcommand{\posOne}{
\begin{cases}
	V_p + 1 & \text{if $V_m + V_p$ is even}\\
	V_p & \text{if $V_m + V_p$ is odd}
\end{cases}
}
\begin{linenomath*}\begin{align*}
\posOne	
\end{align*}\end{linenomath*}
\case{2}{$v(c) < 0$}. Now the relevant inequalities are
\begin{linenomath*}\begin{align}
	\label{eqn:posTwo1}V_m + V_p + v(c) \geq 2v(r_1) &\geq V_m-V_p\\
	0 > v(c) &\geq -2V_m \\
	v(d) &\geq 0
\end{align}\end{linenomath*}
We see that \eqref{eqn:posTwo1} implies that $v(c) \geq -2V_p$, which we would have to use instead of $v(c) \geq -2V_m$ if we did not assume $V_p > V_m$. We have the measure
\newcommand{\posTwo}{
(1-q^{-1})\sum_{v(c) = -2V_m}^{-1}q^{-v(c)}\left( \floor{\tfrac{V_m+V_p+v(c)}{2}} - \ceil{\tfrac{V_m-V_p}{2}} + 1 \right)
}
\begin{linenomath*}\begin{align*}
	\posTwo
\end{align*}\end{linenomath*}

\subsubsection{Case $(1,0)$: The Integral $\Ocl_\gamma$: $h=V_m-1,\dots,-V_m$}
The assumptions for $h$ are equivalent to $E_4,E_5\in\ofr$ and $E_4^\stc,E_5^\stc\in \ofr$. Hence, any solution that makes $E_i$ integral will make $E_i^\stc$ integral except when $E_7$ is integral but $E_7^\stc$ is not, which is equivalent to $V_m + v(d) - h \geq 0$ but $V_m + v(d) - h - 1 < 0$. In other words, $v(d) = -V_m + h$. Since $h\leq V_m-1$ by assumption, this implies $v(d) < 0$. Under this additional requirement, we reduce to the following.
\begin{linenomath*}\begin{align*}
	2h + V_p - V_m &\geq 2v(r_1)\\
	2h + V_p - V_m + v(c) &\geq 2v(r_1)\\
	2v(r_1) &\geq h - V_p\\
	v(c) &\geq -h-V_m\\
	v(d) &= h- V_m
\end{align*}\end{linenomath*}
We see again that there are two cases: $v(c) \geq 0$ and $v(c) < 0$.

\case{1}{$v(c)\geq 0$}. Then the inequalities reduce to the product set defined by
\begin{linenomath*}\begin{align*}
	2h + V_p - V_m \geq 2v(r_1)&\geq h - V_p\\
	v(c) &\geq 0\\
	v(d) &= h- V_m
\end{align*}\end{linenomath*}
Hence the measure here is
\newcommand{\posThree}{
q^{V_m-h}(1-q^{-1})\left( \floor{\tfrac{2h+V_p-V_m}{2}} - \ceil{\tfrac{h-V_p}{2}}+1 \right)
}
\begin{linenomath*}\begin{align*}
	\posThree
\end{align*}\end{linenomath*}
\case{2}{$v(c) < 0$}. The relevant inequalities are
\begin{linenomath*}\begin{align*}
	2h + V_p - V_m + v(c) \geq 2v(r_1) &\geq h - V_p\\
	0 > v(c) &\geq -h-V_m\\
	v(d) &= h- V_m
\end{align*}\end{linenomath*}
Hence the measure of the corresponding set is
\newcommand{\posFour}{
q^{V_m-h}(1-q^{-1})^2\sum_{v(c) = -h-V_m}^{-1}q^{-v(c)}\left( \floor{\tfrac{2h+V_p-V_m+v(c)}{2}} - \ceil{\tfrac{h-V_p}{2}} + 1 \right)
}
\begin{linenomath*}\begin{align*}
	\posFour
\end{align*}\end{linenomath*}

\subsubsection{Case $(0,1)$: The Integral $\Ocl_{\gamma_\stc}$: $h=-V_m-1$} Since $h=-V_m-1$, the expressions $E_*$ cannot all be integral. Because $V_m + v(c) + h + 1\geq 0$, putting $h = -V_m - 1$ into this gives $v(c) \geq 0$. We are left with
	\begin{linenomath*}\begin{align*}
	-V_m-1 + V_p - 2v(r_1) &\geq 0\\
	-V_m-1 + V_p + v(d) - 2v(r_1) &\geq 0\\
	V_p +V_m + 1 + 2v(r_1) &\geq 0\\
	v(c) &\geq 0\\
	v(d) &\geq -2V_m
\end{align*}\end{linenomath*}
We do two cases: $v(d) \geq 0$ and $v(d) < 0$.

\case{1}{$v(d) \geq 0$}. We have:
	\begin{linenomath*}\begin{align*}
	 V_p-V_m-1 &\geq 2v(r_1) \geq -V_p - V_m-1\\
	v(c) &\geq 0\\
	v(d) &\geq 0
\end{align*}\end{linenomath*}
The measure of this set is
\newcommand{\negOne}{
\begin{cases}
	V_p & \text{if $V_m + V_p$ is even}\\
	V_p + 1 & \text{if $V_m + V_p$ is odd}
\end{cases}
}
\begin{linenomath*}\begin{align*}
	\negOne
\end{align*}\end{linenomath*}

\case{2}{$v(d) < 0$}.
\begin{linenomath*}\begin{align}
	\label{eqn:vd2vp}V_p - V_m-1 + v(d) \geq 2v(r_1) &\geq -V_p-V_m-1\\
	\label{eqn:drawing1}v(c) &\geq 0\\
	\label{eqn:drawing2}0 > v(d) &\geq -2V_m
\end{align}\end{linenomath*}
Transitivity in~\eqref{eqn:vd2vp} shows that $v(d) \geq -2V_p$, but this is already satisfied under our hypothesis $V_p >  V_m$. We see that the measure of this set is
\newcommand{\negTwo}{
(1-q^{-1})\sum_{v(d) = -2V_m}^{-1}q^{-v(d)}\left( \floor{\tfrac{V_p-V_m-1 + v(d)}{2}}-\ceil{\tfrac{-V_p-V_m-1}{2}}+1 \right)
}
\begin{linenomath*}\begin{align*}
	\negTwo
\end{align*}\end{linenomath*}

\subsubsection{Case $(0,1)$: The Integral $\Ocl_{\gamma_\stc}$: $h=V_m-1,\dots,-V_m$} We just need to evaluate under the conditions that each $E_i^\stc$ is integral but at least one of $E_i$ is not. The only way this can happen is when $v(c) = -V_m-h-1$. In particular, this implies that $v(c) < 0$.

We start with the following (in)equalities:
	\begin{linenomath*}\begin{align}
		\label{eqn:neg3st}h + V_p + v(c) - 2v(r_1) &\geq 0\\
	\label{eqn:negred3}h + V_p + v(c) + v(d) - 2v(r_1) &\geq 0\\
	V_p - h + 2v(r_1) &\geq 0\\
	V_m + v(c) + h + 1&= 0\\
	\label{eqn:negred4}V_m + v(d) - h -1 &\geq 0
\end{align}\end{linenomath*}
We do two cases: $v(d)\geq 0$ and $v(d) < 0$.

\case{1}{$v(d) \geq 0$}. Then, taking the above inequalities, eliminating the redundant ones (\eqref{eqn:negred3} and \eqref{eqn:negred4}), and putting $v(c) = -V_m - h - 1$ gives
\begin{linenomath*}\begin{align*}
	V_p - V_m-1\geq 2v(r_1) &\geq h - V_p\\
	v(d) &\geq 0\\
	v(c) &= -V_m - h - 1.
\end{align*}\end{linenomath*}
The measure of this set is then
\newcommand{\negThree}{
q^{V_m + h + 1}(1-q^{-1})\left( \floor{\tfrac{V_p-V_m-1}{2}} - \ceil{\tfrac{h-V_p}{2}}+1 \right)
}
\begin{linenomath*}\begin{align*}
	\negThree
\end{align*}\end{linenomath*}
\case{2}{$v(d) < 0$}. We again take \eqref{eqn:neg3st}-\eqref{eqn:negred4}, eliminate the redundant \eqref{eqn:neg3st} and make the substitution $v(c) = -V_m - h - 1$ to get:
\begin{linenomath*}\begin{align*}
	V_p - V_m - 1 + v(d) \geq 2v(r_1) &\geq h - V_p\\
	0 > v(d) &\geq h + 1 - V_m\\
	v(c) &= -V_m - h - 1
\end{align*}\end{linenomath*}
Giving us the measure
\newcommand{\negFour}{
	q^{V_m + h + 1}(1-q^{-1})^2\sum_{v(d) = h +1-V_m}^{-1}q^{-v(d)}\left( \floor{\tfrac{V_p-V_m-1 + v(d)}{2}} - \ceil{\tfrac{h-V_p}{2}}+1 \right)
	}
	\begin{linenomath*}\begin{align*}
		\negFour
\end{align*}\end{linenomath*}

\subsection{All Summands Integral: Taking the Difference}

In this section, we take the measure we have found so far for case $(1,0)$ and subtract from it the measure for case $(0,1)$.

\subsubsection{Extreme Cases: $h=V_m$ for $\Ocl_{\gamma}$ and $h=-V_m-1$ for $\Ocl_{\gamma_\stc}$}

Here, we subtract from the measure for $h=V_m$ in $\Ocl_\gamma$ the measure for $h=-V_m-1$ in $\Ocl_{\gamma_\stc}$, for all summands being integral. We find the difference to be:
\begin{linenomath*}\begin{align*}
	&(-1)^{V_m+V_p} + (1-q^{-1})\sum_{j=1}^{2V_m} q^{j}\left( \floor{\tfrac{V_m+V_p-j}{2}} - \ceil{\tfrac{V_m-V_p}{2}} - \floor{\tfrac{V_p-V_m-1-j}{2}}+\ceil{\tfrac{-V_p-V_m-1}{2}} \right)
\end{align*}\end{linenomath*}
In the summation, we see that the terms where $j$ is odd vanish, leaving us with:
\begin{linenomath*}\begin{align*}
&(-1)^{V_m+V_p} + (1-q^{-1})\sum_{j=1}^{V_m} q^{2j}\left( \floor{\tfrac{V_m+V_p-2j}{2}} - \ceil{\tfrac{V_m-V_p}{2}} - \floor{\tfrac{V_p-V_m-1-2j}{2}}+\ceil{\tfrac{-V_p-V_m-1}{2}} \right)
\end{align*}\end{linenomath*}
Simplifying the floor and ceiling functions gives $(-1)^{V_m+V_p}$, so that we get
\begin{linenomath*}\begin{align*}
(-1)^{V_m+V_p}\left(1 + (1-q^{-1})\sum_{j=1}^{V_m}q^{2j}\right)
= (-1)^{V_m+V_p}\left(1 + (q^{2V_m}-1)\frac{q}{q+1}\right).
\end{align*}\end{linenomath*}
\subsubsection{$h = V_m-1,\dots,-V_m$}

There were two cases here: the first, where $v(c) \geq 0$ for $\Ocl_\gamma$ and $v(d) \geq 0$ for $\Ocl_{\gamma_\stc}$, and the second (reverse the inequalities). 

\case{1}{When $v(c) \geq 0$ for $\Ocl_\gamma$ and $v(d)\geq 0$ for $\Ocl_{\gamma_\stc}$.} In this case we had for $\Ocl_\gamma$ the measure:
\begin{linenomath*}\begin{align*}
	\posThree
\end{align*}\end{linenomath*}
and for $\Ocl_{\gamma_\stc}$:
\begin{linenomath*}\begin{align*}
	\negThree
\end{align*}\end{linenomath*}
We have to sum both over $h= -V_m,\dots,V_m-1$, and subtract the second from the first. However, in placing this in the summation, we may replace $h$ in the second expression with $-h-1$, a transformation which preserves the summation range. We do this since then we will get pairs of nicely paired terms in the sum. So we get:
\begin{linenomath*}\begin{align*}
	q^{V_m}(1-q^{-1})\sum_{h=-V_m}^{V_m-1}q^{-h}\left( \floor{\tfrac{2h+V_p-V_m}{2}} -\ceil{\tfrac{V_p-V_m-2}{2}} + \floor{\tfrac{-h-V_p}{2}} - \ceil{\tfrac{h-V_p}{2}}\right)
\end{align*}\end{linenomath*}
where we have converted the floor to ceiling and vice-version in the second equation to make following the computations with \S\ref{sec:intArithmetic} easier. We break the summation into two sums: one over $h= -V_m,-V_m-2,\dots, V_m-2$, and the other over $h = -V_m+1,\dots,V_m-1$. In other words, the first is over integers of the same parity as $V_m$ and the second opposite parity. However, if we look at the opposite-parity case, we see that $2h + V_p - V_m$ and $V_p - V_m-2$ have the same parity, which is the opposite parity of both $-h-V_p$ and $h - V_p$. Hence, the sum vanishes. So we just have sum over $h= -V_m,-V_m+2,\dots,V_m-2$. In this case, we get
\begin{linenomath*}\begin{align*}
	&\phantom{=.}(-1)^{V_m+V_p}q^{V_m}(1-q^{-1})(q^{V_m} + q^{V_m-2} + \cdots + q^{-V_m+2}) \\&= (-1)^{V_m+V_p}q^2(1-q^{-1})(1 + q^2 + \cdots + (q^2)^{V_m-1})\\
	&=(-1)^{V_m+V_p}(q^{2V_m}-1)\frac{q}{q+1}
\end{align*}\end{linenomath*}

\case{2}{When $v(c) < 0$ for $\Ocl_{\gamma}$ and $v(d) < 0$ for $\Ocl_{\gamma_\stc}$} This is a little more lengthy, but not terribly so. We recall the two terms. The first for $\Ocl_\gamma$ is
\begin{linenomath*}\begin{align*}
	\posFour
\end{align*}\end{linenomath*}
The one for $\Ocl_{\gamma_\stc}$ is
\begin{linenomath*}\begin{align*}
	\negFour
\end{align*}\end{linenomath*}
The first thing we do is replace $h$ by $-h-1$ in the $\gamma_\stc$-version, and use $j$ as the index of summation over positive instead of negative numbers. After doing this, converting the appropriate floors to ceilings and vice-versa, \emph{and} subtracting the second from the first, we get:
\begin{linenomath*}\begin{align*}
	q^{V_m-h}(1-q^{-1})^2\sum_{j=1}^{V_m+h}q^j\left( \floor{\tfrac{2h+V_p-V_m-j}{2}} - \ceil{\tfrac{V_p-V_m-2-j}{2}}  +\floor{\tfrac{-h-V_p}{2}} - \ceil{\tfrac{h-V_p}{2}} \right)
\end{align*}\end{linenomath*}
Of course, we have still \emph{not} summed over $h$ yet, and we also note that if $h = -V_m$, the sum is actually empty. Anyways, to make sense of this chaos we write down two separate summations again: one for $h=-V_m,-V_m+2,\dots, V_m-2$ and one for $h=-V_m+1,-V_m+3,\dots, V_m-1$.

\case{2.1}{$h = -V_m,-V_m+2,\dots, V_m-2$.} Here, the upper limit of the summation is even. We also split the summation into two sums, depending on whether $j$ is even or odd:
\begin{linenomath*}\begin{align*}
	q^{V_m-h}(1-q^{-1})^2\Biggl[ \sum_{j=1}^{\tfrac{V_m+h}{2}}q^{2j}\left(   \floor{\tfrac{2h+V_p-V_m-2j}{2}} - \ceil{\tfrac{V_p-V_m-2-2j}{2}}  +\floor{\tfrac{-h-V_p}{2}} - \ceil{\tfrac{h-V_p}{2}}\right) \\
	+ \sum_{j=1}^{\tfrac{V_m+h}{2}}q^{2j-1}\left(  \floor{\tfrac{2h+V_p-V_m-2j+1}{2}} - \ceil{\tfrac{V_p-V_m-1-2j}{2}}  +\floor{\tfrac{-h-V_p}{2}} - \ceil{\tfrac{h-V_p}{2}} \right) \Biggr]
\end{align*}\end{linenomath*}
We again see that the summation where $j$ is odd vanishes, and we simplify the rest to get
\begin{linenomath*}\begin{align*}
	(-1)^{V_m+V_p}q^{V_m-h}(1-q^{-1})^2\sum_{j=1}^{\tfrac{V_m+h}{2}}q^{2j}
	&= (-1)^{V_m+V_p}q^{V_m-h}(1-q^{-1})^2q^2\frac{q^{V_m+h}-1}{q^2-1}\\
	&= (-1)^{V_m+V_p}\frac{q-1}{q+1}(q^{2V_m}-q^{V_m-h})
\end{align*}\end{linenomath*}
As a sanity check, putting in $h=-V_m$ gives zero. Good! Let's sum over $h$ now to get:
\begin{linenomath*}\begin{align}\label{eqn:massive1}\begin{split}
	&\phantom{=.}(-1)^{V_m+V_p}\left[\frac{q-1}{q+1}q^{2V_m}(V_m) - \frac{q-1}{q+1}q^{V_m}(q^{V_m} + q^{V_m-2} + q^{V_m-4} + \cdots + q^{-V_m+2})\right]\\
	&=(-1)^{V_m+V_p}\left[ \frac{q-1}{q+1}q^{2V_m}(V_m) - \frac{q^2}{(q+1)^2}(q^{2V_m}-1)\right]\end{split}
\end{align}\end{linenomath*}

\case{2.2}{$h=-V_m+1,-V_m+3\cdots,V_m-1$.} This time, the upper limit $V_m+h$ is odd. We again split the summation into two sums, depending on whether $j$ is odd or even:
\begin{linenomath*}\begin{align*}
	q^{V_m-h}(1-q^{-1})^2\Biggl[ \sum_{j=1}^{\tfrac{V_m+h-1}{2}}q^{2j}\left(   \floor{\tfrac{2h+V_p-V_m-2j}{2}} - \ceil{\tfrac{V_p-V_m-2-2j}{2}}  +\floor{\tfrac{-h-V_p}{2}} - \ceil{\tfrac{h-V_p}{2}}\right) \\
	+ \sum_{j=1}^{\tfrac{V_m+h+1}{2}}q^{2j-1}\left(  \floor{\tfrac{2h+V_p-V_m-2j+1}{2}} - \ceil{\tfrac{V_p-V_m-1-2j}{2}}  +\floor{\tfrac{-h-V_p}{2}} - \ceil{\tfrac{h-V_p}{2}} \right) \Biggr]
\end{align*}\end{linenomath*}
This time, the opposite happens: in other words, the summation with even powers of $q$ vanishes, and we are left with:
\begin{linenomath*}\begin{align*}
	\phantom{=.}-(-1)^{V_m+V_p}q^{V_m-h}(1-q^{-1})^2\sum_{j=1}^{\tfrac{V_m+h+1}{2}}q^{2j-1} 
	&=-(-1)^{V_m+V_p}\frac{q-1}{q+1}(q^{2V_m} - q^{V_m-h-1})
\end{align*}\end{linenomath*}
Summing over $h = -V_m+1,-V_m+3,\cdots,V_m-1$ gives
\begin{linenomath*}\begin{align*}
	-(-1)^{V_m+V_p}\left[ \frac{q-1}{q+1}q^{2V_m}(V_m) -\frac{1}{(q+1)^2}(q^{2V_m}-1)\right]	
\end{align*}\end{linenomath*}
We add this to \eqref{eqn:massive1} to get
\begin{linenomath*}\begin{align*}
	(-1)^{V_m + V_p}\left[ \frac{1}{(q+1)^2}(q^{2V_m}-1) - \frac{q^2}{(q+1)^2}(q^{2V_m}-1) \right] = (q^{2V_m}-1)\frac{1-q}{1+q}
\end{align*}\end{linenomath*}
\subsubsection{Gathering of Terms}\label{sec:wheretheansweris}
We have now collected all the terms in our integral for the ``all summands positive'' case. We add them together:
\begin{linenomath*}\begin{align*}
	(-1)^{V_m+V_p}\left[ 1 + (q^{2V_m}-1)\frac{q}{q+1} + (q^{2V_m} - 1)\frac{q}{q+1} + (q^{2V_m}-1)\frac{1-q}{q+1} \right] &= (-1)^{V_m+V_p}q^{2V_m}.
\end{align*}\end{linenomath*}

\subsection{Summands of $E_6$ Integral Only}
The last case is the case of the summands of $E_6$ being integral only. This case is a little different because here, it will be impossible that both $E_7$ and $E_7^\stc$ will be simultaneously satisfied. 

\begin{lem}
	For the summands of $E_6=E_6^\stc$ to be integral and the summands of $E_7$ (resp. $E_7^\stc$) to be not integral, the following inequalities have to be satisfied:
\begin{linenomath*}\begin{align*}
	\text{For $\Ocl_\gamma$} & & \text{(resp. For $\Ocl_{\gamma_\stc}$ )}\\
		h + V_p - 2v(r_1) &\geq 0\\
	h + V_p + v(c) - 2v(r_1) &\geq 0\\
	h + V_p + v(d) - 2v(r_1) &\geq 0\\
	h + V_p + v(c) + v(d) - 2v(r_1) &\geq 0\\
	V_p - h + 2v(r_1) &\geq 0\\
	V_m + v(c) + h &< 0 & V_m + v(c) + h + 1 &< 0\\
	V_m + v(d) - h &< 0 & V_m + v(d) -h - 1 &< 0\\
\end{align*}\end{linenomath*}
\end{lem}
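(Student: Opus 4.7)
The plan is to verify this lemma by direct translation: I would read off each valuation from the table compiled in \S\ref{sec:elimination} and match it to a clause of the hypothesis. The integrality of $E_1$, $E_2$, and $E_3$ (which coincide with $E_1^\stc$, $E_2^\stc$, $E_3^\stc$) yields the first three listed inequalities. The fourth and fifth encode the requirement that each of the two summands of $E_6=E_6^\stc$ individually have nonnegative valuation, which by the dichotomy recorded just before Lemma~\ref{thm:worstcase} is exactly the meaning of ``summands of $E_6$ integral''. The last two inequalities, with strict sign, come from requiring each summand of $E_7$ (respectively $E_7^\stc$) to have strictly negative valuation, which is the meaning of ``summands of $E_7$ not integral'' in the terminology of \S\ref{sec:elimination}.

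The list omits explicit integrality conditions for $E_4$, $E_5$, $E_4^\stc$, and $E_5^\stc$. These are absorbed into the global range on $h$, namely $V_m\geq h\geq -V_m$ on the $\gamma$-side and $V_m-1\geq h\geq -V_m-1$ on the $\gamma_\stc$-side; in that range the lemma at the start of \S\ref{sec:calculations} already guarantees that $E_4,E_5,E_4^\stc,E_5^\stc$ are integral, so they contribute no further constraint and can safely be suppressed from the statement.

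The only piece carrying any arithmetic content is the shift by $1$ between the two columns of the displayed table, which appears only in the inequalities coming from $E_7$ versus $E_7^\stc$. This shift is directly visible in the valuation table: the first summand of $E_7^\stc$ carries an extra factor of $\pi$ and the second carries a factor of $\pi^{-1}$, giving the $+1$ in $V_m+v(c)+h+1<0$ and the $-1$ in $V_m+v(d)-h-1<0$. The main potential pitfall is therefore entirely clerical, namely keeping the sign of this shift consistent and not mistakenly counting the first summand of $E_6$ as an independent constraint separate from $E_2$ and $E_3$. There is no conceptual obstacle; the lemma is essentially a dictionary entry that sets up the bookkeeping for the calculations to follow.
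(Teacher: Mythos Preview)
Your proposal is correct and follows essentially the same approach as the paper's proof: read off the valuations from the table in \S\ref{sec:elimination}, set $v(E_1),v(E_2),v(E_3)$ and the valuations of the two summands of $E_6$ to be $\geq 0$, set the valuations of the two summands of $E_7$ (resp.\ $E_7^\stc$) to be $<0$, and note that the $E_4,E_5$ conditions are absorbed into the standing hypotheses on $h$. The paper's proof is one sentence shorter but identical in content.
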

\begin{proof}
	We set the valuations of the expressions at the beginning of \S\ref{sec:elimination} to be greater than or equal to zero for $E_1$ to $E_5$, and we do the same for the summands  of $E_6$. We also set the summands of $E_7$ (resp. $E_7^\stc$) to have valuation less than zero. As usual, we have omitted the inequalities for $E_4,E_4^\stc,E_5,E_5^\stc$  since these are equivalent to our assumptions on $h$.
\end{proof}

\subsubsection{Case $(1,0)$: The Integral $\Ocl_\gamma$}\label{sec:e6onlypos}
Here, we come up with an expression for $h = -V_m,\dots,V_m$. We make three straightforward observations:
\begin{enumerate}
	\item Since $V_m + h + v(c) < 0$ and $V_m + h \geq 0$, we must have $v(c) < 0$, and similarly, $V_m - h + v(d) < 0$ implies that $v(d) < 0$
	\item Since the summands of $E_7$ are not integral, we must have the valuations of these terms equal. Hence $v(d) -2h=  v(c)$.
	\item In addition, since we are solving a congruence in $E_7$, once $v(d)$ is chosen, the measure of $\{ c : v(E_7)\geq 0\}$ is $q^{V_m+h}$. 
\end{enumerate}
Making the substitution $v(c) = v(d) - 2h$ and eliminating redundancies gives the inequalities
\begin{linenomath*}\begin{align*}
	V_p - h + 2v(d) \geq 2v(r_1)&\geq h - V_p \\
	 h - V_m &> v(d)
\end{align*}\end{linenomath*}
We see from the first that $v(d) \geq h - V_p$, which gives a lower limit for $v(d)$. Thus, the measure of this solution set is
\newcommand{\posFive}{
q^{V_m+h}(1-q^{-1})\sum_{v(d) = h-V_p}^{h-V_m-1}q^{-v(d)}\left( \floor{\tfrac{V_p-h+2v(d)}{2}} - \ceil{\tfrac{h-V_p}{2}}+1 \right).
}
\begin{linenomath*}\begin{align}\label{eqn:posFive}
\posFive	
\end{align}\end{linenomath*}

\subsubsection{Case $(0,1)$: The Integral $\Ocl_{\gamma_\stc}$} Here we have essentially the same three observations as in \S\ref{sec:e6onlypos}, suitably modified.
\begin{enumerate}
	\item Since $V_m + h + v(c) +1< 0$ and $V_m + h+1 \geq 0$, we must have $v(c) < 0$, and similarly, $V_m - h + v(d)-1 < 0$ implies that $v(d) < 0$
	\item Since the summands of $E_7^\stc$ are not integral, we must have the valuations of these terms equal. Hence $v(d) -2h-2=  v(c)$.
	\item In addition, since we are solving a congruence in $E_7^\stc$, once $v(d)$ is chosen, the measure of $\{ c : v(E_7)\geq 0\}$ is $q^{V_m+h+1}$. 
\end{enumerate}
We have the inequalities:
\begin{linenomath*}\begin{align*}
	V_p-h+2v(d)-2 \geq 2v(r_1) &\geq h-V_p\\
	 h + 1 - V_m > v(d) &\geq h  -V_p+1
\end{align*}\end{linenomath*}
where the lower limit for $v(d)$ comes from the first inequality. We obtain the measure
\begin{linenomath*}\begin{align}\label{eqn:negFive}
	q^{V_m+h+1}(1-q^{-1})\sum_{v(d) = h - V_p+1}^{h - V_m}q^{-v(d)}\left( \floor{\tfrac{V_p -h+2v(d)-2}{2}} - \ceil{\tfrac{h - V_p}{2}}+1 \right)
\end{align}\end{linenomath*}

\subsubsection{Gathering it Together} In this case, we see that after shifting the index of summation in \eqref{eqn:negFive} so that $v(d)$ starts at $h - V_p$, we get exactly \eqref{eqn:posFive}, so that the two cancel.

\section{Results and Interpretations}\label{sec:results} The calculations of \S\ref{sec:calculations}, shown particularly in \S\ref{sec:wheretheansweris} give
\begin{linenomath*}\begin{align*}
	\Ocl_\gamma^\kappa(\mathbf{1}) = (-1)^{V_m+V_p}q^{2V_m}
\end{align*}\end{linenomath*}
We recall that $\gamma=\diag(x,y,-y,-x)$ and $V_m = v(x-y)$. We have suggested that the corresponding endoscopic space $(H,\theta_H)$ is two copies of $\UU(1)\times\UU(1)\hookrightarrow \UU(2)$, given as follows. Set $J_2 = \left( \begin{smallmatrix} 0 & 1\\1 & 0\end{smallmatrix} \right)$. We have $\UU(2) := \{ g\in \GL_{2} : J\overline{g}^{-t}J = g\}$ and $\UU_2^{J_2}\cong \UU(1)\times \UU(1)$, so that $\theta_H = (J_2, J_2)$. A trivial computation shows that for $\gamma_H = (\diag(x,-x),\diag(y,-y))$, the corresponding stable orbital integral is just one. Although this does not suggest a way to define endoscopic pairs $(H,\theta_H)$ in general, it is the likely choice given the situation with the adjoint case. Hence:
\begin{thm}For $(\UU_4,\theta)$ and $(\UU_2\times\UU_2,\theta_H)$ with $\gamma=\diag(x,y,-y,-x)$ nearly singular and $\gamma_H = \diag(x,-x)\times\diag(y,-y)$, we have the identity
	\begin{linenomath*}
		\begin{align*}
			\Ocl^\kappa_\gamma(\mathbf{1}_{\gfr_1(\ofr)}) = (-1)^{V_m+V_p}q^{2V_m}\Scl\Ocl_{\gamma_H}(\mathbf{1}_{\hfr_1(\ofr)}).
		\end{align*}
	\end{linenomath*}
\end{thm}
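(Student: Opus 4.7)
The plan is to reduce the identity to an explicit computation of $\Ocl^\kappa_\gamma(\mathbf{1}_{\gfr_1(\ofr)})$ and a trivial computation of $\Scl\Ocl_{\gamma_H}(\mathbf{1}_{\hfr_1(\ofr)})$, then combine. First I would use the Iwasawa decomposition $G_0(F) = M(F)U(F)G_0(\ofr)$ attached to the cocharacter $\lambda$ introduced earlier, together with the parametrisations~\eqref{eqn:formUrad} and~\eqref{eqn:formLevi}, to rewrite the difference of orbital integrals defining $\Ocl^\kappa_\gamma(\mathbf{1})$ as an integral over $E^\times\times E^\times\times F\delta\times F\delta$. The observation in the paragraph after~\eqref{eqn:formLevi} shows that the stably conjugate integrand is obtained from the $\gamma$ integrand by replacing $\overline{r}_i$ with $\pi\overline{r}_i$, so both integrals are of the same shape.

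Next I would translate the integrality condition on the matrix $Y=\Ad(u^{-1})\Ad(m^{-1})\gamma$ into the seven conditions $E_1,\dots,E_7\in\ofr_E$ via the integral change of basis $A\in\GL_6(\ofr)$ and Proposition~\ref{thm:basicintegrality}. The integrals then become counting measures of the sets
\begin{linenomath*}\begin{align*}
\{(c,d,r_1,r_2) : E_1,\dots,E_7 \in \ofr_E\} \text{~and the analogous set for~} E_i^\stc,
\end{align*}\end{linenomath*}
and their measures differ only through $E_4,E_5,E_7$. Using the ``nearly singular'' hypothesis $V_p>V_m$, I would parametrise by $h=v(r_1)+v(r_2)$ and split according to whether each of the summands of $E_6,E_7$ (resp.\ $E_6^\stc,E_7^\stc$) is individually integral. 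Lemmas~\ref{thm:worstcase}--\ref{thm:secondworstcasestc} eliminate the pathological configurations, leaving only two situations to analyse: all summands integral, and the summands of $E_6$ integral but those of $E_7$ not. In each situation I would further subdivide by signs of $v(c)$ and $v(d)$, express the measure of the solution set as a geometric sum using the floor/ceiling identities in \S\ref{sec:intArithmetic}, and evaluate the $h$-sum over the range dictated by $E_4,E_5$.

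The main obstacle is this case-by-case bookkeeping: many of the sums over $h$, $v(c)$, or $v(d)$ have terms of opposite parity whose floor/ceiling expressions must be handled separately, and one has to verify that ``$E_6$-only'' contributions on the two sides cancel after a shift of index (as in the reduction of~\eqref{eqn:negFive} to~\eqref{eqn:posFive}), while the ``all summands integral'' contributions combine via pairing $h\mapsto -h-1$ between $\Ocl_\gamma$ and $\Ocl_{\gamma_\stc}$. Carrying through the arithmetic, the four surviving pieces
\begin{linenomath*}\begin{align*}
1,\qquad (q^{2V_m}-1)\tfrac{q}{q+1},\qquad (q^{2V_m}-1)\tfrac{q}{q+1},\qquad (q^{2V_m}-1)\tfrac{1-q}{q+1},
\end{align*}\end{linenomath*}
all carrying the common sign $(-1)^{V_m+V_p}$, telescope to $(-1)^{V_m+V_p}q^{2V_m}$.

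Finally, for the endoscopic side I would compute directly on $(H,\theta_H)=(\UU_2,J_2)^{\times 2}$: the centraliser of $\gamma_H=\diag(x,-x)\times\diag(y,-y)$ in $H_0=\UU_1^{\times 4}$ equals $H_0$ itself, so $\Scl\Ocl_{\gamma_H}(\mathbf{1}_{\hfr_1(\ofr)})$ reduces to a volume computation which, with our Haar normalisations, equals $1$. Combining with the previous computation yields the transfer factor $\Delta(\gamma,\gamma_H)=(-1)^{V_m+V_p}q^{2V_m}$ and the stated identity.
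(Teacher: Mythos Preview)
Your proposal is correct and follows essentially the same approach as the paper's own proof: the same Iwasawa decomposition and parametrisations, the same reduction to the conditions $E_1,\dots,E_7$, the same elimination lemmas, the same case split (all summands integral versus $E_6$-only), the same $h\mapsto -h-1$ pairing and index shift for the cancellation, and the same four telescoping pieces summing to $(-1)^{V_m+V_p}q^{2V_m}$. Your treatment of the endoscopic side is also the paper's ``trivial computation,'' made slightly more explicit.
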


The factor of $(-1)^{V_m+V_p}$ is not terribly mysterious, and one could likely eliminate it by using the relative Kostant-Weierstrass section shown to exist in~\cite{levyInvolutions} and following the ideas in \cite{kotttrans}, so we concentrate on the power of $q$. We offer the following tentative interpretation of the power of $q$.

We define $\widetilde{\gamma}\in\ufr^\theta(F)$ by $\lambda\diag(x,y,y,x)$ where $\lambda\in E$ is such that $v(\lambda) = 0$ and $\overline{\lambda} = -\lambda$. The map $\gamma\mapsto \widetilde{\gamma}$ gives an $F$-linear isomorphism between $\Lie(I_\gamma)(F)$ and $\afr(F)$. But centraliser $I_\gamma$ of $\gamma$ in $G_0$ is $I_\gamma = \{ \diag(a_{11},a_{22},a_{22},a_{11}) : a_{ii}\overline{a}_{ii} = 1\}$. Its roots, or nonzero weights, of its action on $\gfr_1$ (and on $\gfr_0$), are given (in terms of homomorphisms to $\Res{E/F}(\G_m)$, using adjointness of the restriction of scalars) by
\begin{linenomath*}\begin{align*}
\diag(a_{11},a_{22},a_{22},a_{11})&\mapsto a_{11}a_{22}^{-1},\\
\diag(a_{11},a_{22},a_{22},a_{11})&\mapsto a_{11}^{-1}a_{22}.
\end{align*}\end{linenomath*}
Each root space being two-dimensional. Let $D : \Lie(I_\gamma)\to F\delta$ be the discriminant function $\prod_{\alpha} (d\alpha)^{r_\alpha}$ where $r_\alpha$ is the $F$-dimension of the corresponding root space. Na\"ively using the formula in~\cite{ngo2010lemme} with this discriminant function on $\widetilde{\gamma}$ gives
\begin{linenomath*}\begin{align*}
	D(\widetilde{\gamma})/2 &= \frac{2v(x-y) + 2v(x-y)}{2}\\
	&= 2V_m.
\end{align*}\end{linenomath*}
We shall attempt a reasonable explanation in a future work.

\bibliographystyle{alpha}
\bibliography{polakmain}

\end{document}